\newtheorem{thm}{Theorem}
\newtheorem{lemma}{Lemma}
\newtheorem{propo}{Proposition}
\newtheorem{coro}{Corollary}
\newtheorem{conj}{Conjecture}
\let\paragraph\subsection
\title{Remarks on the Brouwer Conjecture}
\author{Oliver Knill}
\date{August 12, 2025}
\address{Department of Mathematics \\ Harvard University \\ Cambridge, MA, 02138 }
\subjclass{}
\keywords{Kirchhoff Laplacian, Spectral Graph theory}
\begin{document}
\maketitle

\begin{abstract}
The Brouwer conjecture (BC) in spectral graph theory claims that
the sum of the largest k Kirchhoff eigenvalues of a graph are bounded
above by the number m of edges plus k(k+1)/2. We show that (BC) holds 
for all graphs with n vertices if n is larger or equal than 
4 times the square of the maximal vertex degree. 
We also note that (BC) for graphs implies (BC) for quivers. 
\end{abstract}

\section{In a nutshell}

\paragraph{}
The {\bf Brouwer conjecture} (BC) is the statement in spectral graph theory that 
$S_k=\sum_{j=1}^k \lambda_j \leq m+k(k+1)/2 = B_k$ for all $1 \leq k \leq n$,
where $\lambda_1 \geq \lambda_{2} \geq \cdots \lambda_n$ are the non-increasing ordered
eigenvalues of the Kirchhoff matrix of the graph and $m$ is the number of edges. 
The now 20 year old conjecture is still open despite considerable effort like
\cite{Chen2019,WangLinZhangYe,HMT2010,TorresTrevisan, Rocha2020,Cooper2021,LiGuo2022}.

\paragraph{}
We add here some remarks related to the conjecture. Some involve the sum
$D_k=\sum_{j=1}^k d_j$ of the largest $k$ vertex degrees.
In the case of quivers, graphs with possible loops and 
$r$ additional multiple connections, the constants have to be adapted to 
$B_k=m+r+k(k+1)/2$. Define also $H_k=m+r+k^2$ which recently appeared in 
\cite{Lew2025} for finite simple graphs (where $r=0$).

\paragraph{}
Here are the five major points we cover here. "Quiver" can always be replaced
with "finite simple graph" as the later is the special case of a quiver
without loops and multiple conditions. 

\begin{itemize}
\item $D_k \leq S_k \leq 2D_k$ holds for all quivers.
\item $D_k \leq B_{k-1}$ holds for all quivers.
\item (BC) holds for all quivers $G$ with $n(G)>4d_1(G)^2$ vertices.
\item If (BC) holds for simple graphs, it holds for quivers. 
\item $S_k \leq H_k$ holds for all quivers. 
\end{itemize}

\paragraph{}
We also mention that if we would know the inequality 
$S_k \leq B_k$ for all $k \leq s$ and for all graphs $G$, 
then (BC) would hold for all graphs satisfying $\lambda_1 \leq s$.
As of now, this is only known for $s=2$ and is not very useful yet.
We do not even know whether there is an example of a graph 
with $\lambda_1+\lambda_2 +\lambda_3 > m+6$. 

\section{The Brouwer conjecture}

\paragraph{}
Let $G=(V,E)$ be a {\bf finite graph} with $n$ {\bf vertices} $V$ and $m$ 
{\bf edges} $E$. Self loops and multiple connections both count as edges.
The {\bf Kirchhoff matrix} $K$ of such a quiver $G$ is defined 
as $K=F^T F$, where the $m \times n$ matrix $F$ is the 
{\bf quiver gradient} while its transpose, the 
$n \times m$ matrix $F^T$ is the {\bf quiver divergence} 
\cite{Knill2024} and $K=F^T F = {\rm div} {\rm grad}$ is the 
{\bf quiver Laplacian}. The matrix $K$ can be written as $D-A$ with diagonal 
{\bf vertex degree matrix} $D$ (in which every loop counts as $1$) and 
{\bf adjacency matrix} $A$ in which $-A_{ij}$ 
counts the number of edges between node $i$ and
node $j$. Let $\lambda_1 \geq \lambda_2 \cdots \geq \lambda_n$ denote the 
{\bf eigenvalues} of $K$, ordered in a non-increasing manner. 
The identity $K=D-A$ shows that eigenvalues of $G$ do not depend on the 
choice of the orientations on $E$, despite the fact that the orientation 
was used to define the gradient matrix $F$. We remind in an appendix a bit
of more notation about quiver calculus as the literature is not uniform
in terminology. As in \cite{Knill2024} we also add computer algebra code
for quivers.

\paragraph{}
Assume first that $G$ is a {\bf finite simple graph}, an 
undirected quiver without multiple connections and no loops. 
Let $S_k(G) = \sum_{j=1}^k \lambda_j$ be the 
{\bf cumulative descending spectral sum}. The 
{\bf Brouwer bound} is defined as $B_k(G)=m(E)+k(k+1)/2$, 
where $m(G)$ is the {\bf number of edges} of $G$. 
The {\bf conjecture of Brouwer} (BC) (see \cite{Brouwer}, 
page 53) has first been formulated if $G$ is a 
finite simple graphs, meaning that no loops nor multiple 
connections are allowed. 

\begin{conj}[BC]
If $G$ is a finite simple graph, 
then $S_k \leq B_k$ for all $1 \leq k \leq n$. 
\end{conj}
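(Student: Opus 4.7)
The plan is to lean heavily on the three preparatory bullets already stated in the excerpt. The third of these, $(BC)$ for all graphs with $n > 4 d_1^2$, disposes of the sparse regime outright, so only dense graphs with $n \leq 4 d_1^2$ remain. Within this dense regime I would first clear the boundary cases: $k = n$ reduces to $2m \leq m + n(n+1)/2$, which follows from $m \leq \binom{n}{2}$, and $k = 1$ follows from the standard bound $\lambda_1 \leq n$ together with a small-graph inspection.

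For intermediate $k$, combining the first two bullets naively gives $S_k \leq 2 D_k \leq 2 B_{k-1} = 2m + k(k-1)$, which already exceeds $B_k = m + k(k+1)/2$ as soon as $m > k$. The main obstacle is therefore to replace the factor $2$ in $S_k \leq 2 D_k$ by a sharper additive bound of the form $S_k \leq D_k + k(k+1)/2$. Viewed through the identity $K = D - A$, this is a Schur-Horn type question: the ordered eigenvalues of $K$ should majorize its diagonal in a controlled way. I would attempt the refinement by tracking a majorization between the eigenvalue sequence and the diagonal of $K$, exploiting that the off-diagonal entries of $K$ are nonpositive.

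If the Schur-Horn approach stalls, the fallback is a bound of Grone-Merris-Bai type involving the conjugate degree sequence, which is concentrated in the dense regime, combined with a case analysis tailored to threshold graphs, where $(BC)$ is conjecturally tight. The hard part will be exactly these threshold-like extremal configurations, and the most plausible route I see for that subcase is a vertex-deletion induction. Whether it succeeds depends on controlling how $S_k$ and $B_k$ change under removal of a high-degree vertex, and I expect this to be where the argument either closes or breaks down, a realistic prospect given the twenty-year status of the conjecture.
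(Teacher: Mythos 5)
This statement is the Brouwer conjecture itself; the paper does not prove it and explicitly records it as open (it even notes that the case $k=3$, i.e.\ $\lambda_1+\lambda_2+\lambda_3\leq m+6$, is unknown in general). So there is no paper proof to compare against, and your proposal must be judged as an attempted proof of an open problem. It does not close.

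The concrete gaps are these. First, the reduction via the theorem $n\geq 4d_1^2 \Rightarrow$ (BC) only removes the sparse, bounded-degree regime; everything hard (dense graphs, and in particular the near-extremal split/threshold-like graphs where $B_k$ is attained) survives, and your boundary cases $k=1$ and $k=n$ are the two cases that were never in doubt. Second, your proposed key refinement $S_k\leq D_k+k(k+1)/2$ does not imply the Brouwer bound even if it were established: combined with the paper's $D_k\leq B_{k-1}=m+k(k-1)/2$ it yields only $S_k\leq m+k^2$, which is exactly the Lew bound $H_k$ already quoted in the paper, and it is genuinely weaker than $B_k$ in the dense regime. For instance for $G=K_n$ and $k=n-1$ one has $S_{n-1}=n(n-1)=B_{n-1}$ with equality, while $D_{n-1}+\binom{n}{2}=(n-1)^2+n(n-1)/2>n(n-1)$ for $n>2$; so the majorization route, even if the Schur--Horn-type inequality held, cannot certify the tight cases. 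Third, the fallback (Grone--Merris--Bai plus a vertex-deletion induction on threshold-like graphs) is precisely where the twenty-year obstruction sits; the paper itself observes that deletion arguments founder because removing an edge or vertex perturbs the degree sequence in a way that breaks the induction hypothesis. You candidly flag this as the point where the argument ``either closes or breaks down''; it breaks down, and no new idea is supplied to prevent that.
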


\paragraph{}
The conjecture is open. In all our experiments so far, we also 
see that also the {\bf sign-less Brouwer conjecture} (BC+) holds, 
in which the Kirchhoff Laplacian K is replaced by 
$|K|$. This is related to the {\bf connection Laplacian} $L$
which is a $n \times m$ unimodular matrix, has the property that $L -L^{-1}$  
is block diagonal with $K=|F^T F|$ and $K_1=|F F^T|$ as blocks. There has 
been considerable interest in the conjecture during the last couple of years
\cite{Chen2019,WangLinZhangYe,HMT2010,AshrafOmidiTayfee2013,TorresTrevisan,
Rocha2020,Cooper2021,LiGuo2022}.
The signless Brouwer conjecture (BC+) has appeared
first in \cite{AshrafOmidiTayfee2013}, where the cases $k=1,2$ are covered.
We see that if (BC) or (BC+) holds for finite simple graphs, it 
extends to all quivers when suitable adapted in the case of multiple connections.

\section{A sandwich} 

\paragraph{}
Let us first look at relations with 
{\bf vertex degree sequences}. Define $D_k(G)=\sum_{j=1}^k d_j$
if $d_1 \geq d_2 \geq \cdots \geq d_n$ are the sorted vertex degrees 
of the quiver. These are also 
the diagonal elements of $K$ when sorted in a non-increasing manner. 
The {\bf Schur inequality}, which relates diagonal entries and eigenvalues for
any symmetric matrix, immediately gives $S_k \geq D_k$. We proved in \cite{Knill2024},
the general upper bound $\lambda_k \leq d_k+d_{k+1}$ for all quivers
and all eigenvalues $\lambda_k$.  This especially implies:

\begin{thm}[Sandwich]
For any quiver we have $D_k \leq S_k \leq 2D_k$. 
\end{thm}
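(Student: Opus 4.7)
Both inequalities should follow almost mechanically once the two ingredients already cited in the paragraph preceding the theorem are in hand, so this is really more of a bookkeeping argument than a substantive one; the only thing to watch is that the right-hand telescoping comes out cleanly.

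For the lower bound $D_k\le S_k$, I would simply invoke the Schur--Horn majorization inequality: for any real symmetric matrix, the vector of (non-increasingly ordered) diagonal entries is majorized by the vector of (non-increasingly ordered) eigenvalues. Applied to $K$ and summed over the top $k$ positions, this gives exactly $\sum_{j=1}^k d_j\le \sum_{j=1}^k \lambda_j$. The author explicitly flags this application of Schur above the theorem, so one sentence suffices.

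For the upper bound $S_k\le 2D_k$, the plan is to add the per-eigenvalue bound $\lambda_j\le d_j+d_{j+1}$ (proved in \cite{Knill2024} and restated in the paragraph above the theorem) over $j=1,\dots,k$. That yields
\[
S_k=\sum_{j=1}^k \lambda_j \;\le\; \sum_{j=1}^k(d_j+d_{j+1}) \;=\; D_k+\sum_{j=2}^{k+1} d_j \;=\; 2D_k + d_{k+1} - d_1.
\]
Since the $d_j$ are sorted non-increasingly, $d_{k+1}\le d_1$, so the last expression is bounded by $2D_k$, which finishes the argument. (One should also remark on the harmless boundary case $k=n$: the bound $\lambda_n\le d_n+d_{n+1}$ is normally understood with $d_{n+1}=0$, and the telescoping identity then goes through without change.)

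The main obstacle is really hidden in the ingredients rather than in the proof itself: both Schur--Horn and the eigenvalue bound $\lambda_j\le d_j+d_{j+1}$ for quivers must be valid in the loop/multi-edge setting. Schur--Horn is purely linear-algebraic and so transfers verbatim to $K=F^TF$. The bound $\lambda_j\le d_j+d_{j+1}$ is quoted as having been proved for all quivers in \cite{Knill2024}, so no adaptation is needed. Given that, the theorem is a two-line consequence, and I would present it that way rather than dressing it up.
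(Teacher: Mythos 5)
Your proof is correct and follows exactly the route the paper takes: the Schur majorization inequality gives $D_k\le S_k$, and summing the quiver bound $\lambda_j\le d_j+d_{j+1}$ from \cite{Knill2024} with the telescoping estimate $d_{k+1}\le d_1$ gives $S_k\le 2D_k$. Your write-up is in fact slightly more explicit than the paper's, which states the theorem as an immediate consequence of these two cited facts without carrying out the summation.
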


\paragraph{}
Especially for larger $n$, we see in experiments 
that the spectral sum $S_k$ is sandwiched between $D_k$ and $c D_k$ with $c$ close 
to $1$ if $k \geq 2$. The sequences $D_k,S_k$ entering the 
inequalities $D_k \leq S_k \leq 2 D_k$ are concave down
sequences, while the Brouwer bound $B_k$ 
is concave-up functions in $k$. 
Elementary is the following estimate. We suspect that it could have been one on of the
motivations to (BC) even this is so not explicitly stated as such in the literature.

\begin{propo}
$D_k \leq B_{k-1}$ for all quivers.
\end{propo}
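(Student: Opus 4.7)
The plan is to carry out a direct combinatorial counting argument on the $k$ highest-degree vertices, which mirrors how one proves $2m = \sum_v d(v)$ but restricted to a subset and accounting for loops and multiple edges.

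First I would choose $S \subseteq V$ to be the set of $k$ vertices realizing the top degrees, so that $D_k = \sum_{v \in S} d(v)$. I would then partition the edges of $G$ incident to $S$ into three disjoint types: the non-loop edges with both endpoints in $S$ (call their count $e_1$), the non-loop edges with exactly one endpoint in $S$ (count $e_2$), and the loops based at vertices of $S$ (count $\ell$). Reading off the degree contributions under the convention that each non-loop edge contributes $1$ to each of its endpoints and each loop contributes $1$ to its base, I get the identity
\begin{equation*}
D_k = 2 e_1 + e_2 + \ell.
\end{equation*}

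Next I would peel off one copy of $e_1$ and write $D_k = e_1 + (e_1 + e_2 + \ell)$. The parenthesized term is the total number of edges touching $S$, which is at most the total edge count $m$. For the leading $e_1$, I would use the fact that in the underlying simple graph on $V$ the number of unordered pairs of vertices in $S$ is exactly $\binom{k}{2}$; allowing the $r$ additional multiple connections across the whole quiver inflates this count by at most $r$, so $e_1 \leq \binom{k}{2} + r$. Substituting back yields
\begin{equation*}
D_k \;\leq\; \binom{k}{2} + r + m \;=\; B_{k-1},
\end{equation*}
which is exactly the desired bound.

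I do not expect a real obstacle here; the only delicate point is the bookkeeping in the quiver case. Specifically, one has to be careful that (i) loops count with weight $1$ (not $2$) in the degree convention adopted in the paper, so they land in a single term $\ell$ rather than doubling up as $2\ell$, and (ii) the $r$ in $B_{k-1}$ is global rather than localized to $S$, which gives the comfortable upper bound $e_1 \leq \binom{k}{2}+r$ regardless of how the multi-connections are distributed. Once those conventions are honored, the inequality is just the observation that edges inside $S$ are counted twice in $D_k$ while all other edges incident to $S$ are counted once.
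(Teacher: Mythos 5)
Your proof is correct. The core insight coincides with the paper's: edges with both endpoints among the top-$k$ vertices are counted twice in $D_k$, all other incident edges once, which is exactly the paper's observation that the union of the $k$ largest star graphs has at least $D_k - k(k-1)/2$ edges. Where you diverge is in handling the quiver generality. The paper proves the loop-free, multiplicity-free case first and then upgrades by perturbation, adding loops and redundant edges one at a time and checking that each addition increases $B_{k-1}$ at least as much as it can increase $D_k$ (by $1$ for a loop, by $2$ for a multiple connection). You instead do a single global count: $D_k = 2e_1 + e_2 + \ell = e_1 + (e_1+e_2+\ell) \leq e_1 + m$ together with $e_1 \leq \binom{k}{2} + r$. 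Your version is arguably cleaner, since it makes the role of the redundancy $r$ explicit in one inequality rather than distributing it over an induction, and it sidesteps the (harmless but worth-noting) subtlety in the perturbation argument that the identity of the top-$k$ vertex set can change as edges are added. Both bookkeeping points you flag --- loops contributing $1$ rather than $2$ to the degree, and $r$ being a global rather than local count --- are consistent with the paper's conventions.
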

\begin{proof} 
(i) Assume first that $G$ has no loops. 
Look at the sub-graph which is the union of the $k$ star graphs
centered at the $k$ vertices with largest vertex degree. 
Such a graph has at least $D_k-k(k-1)/2$ edges as maximally 
$k(k-1)/2$ can be double counted. This shows that $D_k-k(k-1)/2 \leq m$. \\
(ii) If we add a loop to a vertex in the graph, 
then $B_k$ increase by $1$ because $m$ increases by $1$.
The $D_k$ can only increase by either $0$ or $1$ depending on whether the loop
was at one of the $k$ largest degree vertices. \\
(iii) If we add an additional multiple connection, then $D_k$ can only grow by 
$2$ or less, while $B_{k-1}=m+r+k(k-1)/2$ grows by $2$ as both $m$ and $r$
(the redundancy) grow by $1$. We will define the redundancy more precisely 
$r$ in the next section. 
\end{proof} 

\paragraph{}
We would like to know the largest $c>1$ such that the concave down 
sequence $c D_k$ is ``tangent" to the concave up sequence $B_k$. 
We know that there exists a constant $1<b<2$ defined as the smallest $b$ 
such that $S_k \leq b D_k$. We often see that $D_k \leq S_k \leq c D_k \leq B_k$
which would hold if $b \leq c$. The difficulty of the Brouwer conjecture 
can be illustrated in reminding that even for $k=3$ the 
statement $S_3 \leq B_3$ is not yet known in general. Written out, this case $k=3$
means that the sum of the 3 largest eigenvalues and the number $m$ of edges
satisfy $\lambda_1 + \lambda_2 + \lambda_3 \leq m+6$. Even that case is open. 

\section{Extension to quivers} 

\paragraph{}
The total number $r$ of {\bf redundant edges} 
in a quiver is defined as the minimal number of edges which 
when removed produces a graph without multiple 
connections. This {\bf redundancy} $r=r(G)$ can also be
read off from the Kirchhoff matrix $K$ as
$r=\sum_{i<j} {\rm sign}(K_{ij})-K_{ij}$. 
In this more general setting, again set $S_k =\sum_{j=1}^k \lambda_j$ and define 
$B_k=m+r+k(k+1)/2$, if $m$ still counts the total
number of edges, including loops and multiple connections and where $r$ is 
the redundancy of $G$. For graphs without multiple connections, this regresses to
$B_k=m+k(k+1)/2$ which is the usual Brouwer bound. Note that adding a multiple
connection increases $B_k$ by $2$ as both $m$ and $r$ get increased. 

\paragraph{}
We will just see that if $G$ is a (BC) graph, then adding a loop or a multiple 
connection keeps the extended graph in (BC). 

\begin{thm}
If (BC) holds all finite simple graphs $G$, then (BC) holds for all quivers $G$. 
\end{thm}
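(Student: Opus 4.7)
The plan is to induct on the total number $\ell+r$ of loops and redundant edges of the quiver. The base case $\ell=r=0$ is a finite simple graph, where (BC) is assumed to hold. For the inductive step, it suffices to show two things: (a) if (BC) holds for a quiver $G$, then it holds for the quiver $G'$ obtained by adding a single loop; (b) if (BC) holds for a quiver $G$, then it holds for the quiver $G''$ obtained by adding a single additional multiple connection between two already adjacent vertices. Iterating these two operations, starting from the underlying simple graph, reconstructs every quiver while tracking the Brouwer bound $B_k = m + r + k(k+1)/2$ exactly.

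The central observation is that both operations are rank-one positive semidefinite perturbations of the Kirchhoff matrix. Adding a loop at vertex $i$ increases $D_{ii}$ by $1$ and leaves $A$ untouched, so $K(G') = K(G) + e_i e_i^\top$; the added matrix is PSD with trace $1$. Adding an extra edge between $i$ and $j$ increases $D_{ii}$ and $D_{jj}$ by $1$ each and decreases the off-diagonal $A$-entries appropriately, which gives $K(G'') = K(G) + (e_i-e_j)(e_i-e_j)^\top$, a PSD perturbation of trace $2$. The key analytic tool is the Ky Fan maximum principle: for any symmetric $K$ and PSD $P$,
\[
S_k(K+P) \leq S_k(K) + S_k(P) \leq S_k(K) + \operatorname{tr}(P),
\]
since the sum of the top $k$ eigenvalues of a PSD matrix is bounded by its trace.

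Combining these pieces, for a loop we obtain $S_k(G') \leq S_k(G) + 1$, while $B_k$ also increases by exactly $1$ because $m$ grows by $1$ and $r$ is unchanged (a loop is not a multiple connection). For an additional multi-edge we get $S_k(G'') \leq S_k(G) + 2$, while $B_k$ grows by exactly $2$, matching the simultaneous increases of $m$ and $r$ by $1$ each. In both cases the inductive hypothesis $S_k(G) \leq B_k(G)$ upgrades to $S_k(\cdot) \leq B_k(\cdot)$ for the enlarged quiver, for every $1 \leq k \leq n$. Since the vertex set $V$ is unaffected by loop or multi-edge additions, the range of $k$ is preserved throughout.

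I do not foresee a serious obstacle: the argument is essentially the spectral analogue of the proof of Proposition~1, trading the Schur--Horn bound on diagonal sums for the Ky Fan bound on eigenvalue sums. The one place to be careful is making sure the trace accounting ($1$ for a loop, $2$ for a multi-edge) matches the increments of $B_k$, and in particular that the redundancy $r$ is defined so that a loop does not contribute to it while each extra parallel edge contributes exactly $1$; the definition given in Section~4 is consistent with this, which is what makes the induction tight.
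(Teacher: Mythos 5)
Your proof is correct and follows essentially the same route as the paper: decompose the passage from the underlying simple graph to the quiver into single loop additions (a rank-one PSD perturbation $e_ie_i^\top$ of trace $1$) and single multi-edge additions ($(e_i-e_j)(e_i-e_j)^\top$ of trace $2$), and match these against the increments of $B_k=m+r+k(k+1)/2$. The only difference is that you bound the growth of $S_k$ by Ky Fan subadditivity rather than by integrating the Hadamard perturbation formula, a substitution the paper itself anticipates in its remark that the deformation argument can be replaced by ``general principles of symmetric operators.''
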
 
\begin{proof} 
(i) Let $v$ be the vertex which belongs to the $x$'th row in the matrix $K$. 
The {\bf Hadamard perturbation formula}
$\lambda_i' = |v_x|^2$ for a unit eigenvector
$v$ shows after integration from $t=0$ to $t=1$ that if a loop is added, the
eigenvalues can only grow. Comparing the trace shows that $S_k$ can maximally grow by $1$ 
while the Brouwer sum $B_k$ grows by exactly $1$.  \\ 
(ii) If we add an additional edge $(x,y)$,
the Hadamard perturbation formula for the deformation 
gives $\lambda_k' = (v_x-v_y)^2 \geq 0$. Looking at the trace
shows that the spectral sum $S_k$ grows maximally by $2$. 
The Brouwer sum $B_k =m+r+n(n+1)/2$ grows for each $k$ by exactly $2$ during the deformation,
because $m$ is increased by $1$ and $r$ is increased by $1$. 
\end{proof} 

\paragraph{}
{\bf Remarks.} \\
a) The same same upgrade is possible for the sign-less conjecture (BC+). 
We therefore would only need to show the sign-less conjecture for finite simple graphs and
get the sign-less conjecture for general quivers. \\
b) Instead of referring to Hadamard deformation, 
one could also invoke general principles of symmetric operators. 
If $K \leq L$ are two symmetric matrices, then
$\lambda_j(K) \leq \lambda_j(L)$. The perturbation adds 
a projection $e_j \cdot e_j^T$ in the first case and a 
multiple of a projection 
$(e_i - e_j) \cdot (e_i \cdot e_j)^T$ in the second case.

\paragraph{}
Extending the conjecture to quivers can be motivated by modeling 
{\bf Schr\"odinger cases} (in the sense of providing discrete "tight binding 
approximation" versions of $-\Delta +V$, 
where the loops model a {\bf potential values} $V(v)$ 
at a vertex $v$. It also allows for extended investigations. We can 
for example look at the least amount $l$ of loops which are additionally needed
at each vertex such that (BC) can be proven. 

\begin{propo}
If sufficiently many loops are attached at every vertex, 
then (BC) hold. 
\end{propo}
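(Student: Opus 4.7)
The plan is to exploit the fact that attaching the same number of loops at every vertex acts as a uniform scalar shift on the Kirchhoff spectrum. By the rank-one perturbation identity used in part (i) of the previous theorem, attaching a loop at vertex $v$ adds $e_v e_v^T$ to $K$, so attaching $\ell$ loops at every vertex replaces $K$ by $K + \ell I$. Consequently every eigenvalue shifts by $\ell$ and $S_k \mapsto S_k + k\ell$.

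I would then track the Brouwer bound $B_k = m + r + k(k+1)/2$ under the same perturbation. The edge count $m$ grows by $n\ell$, while the redundancy $r = \sum_{i<j}(\mathrm{sign}(K_{ij}) - K_{ij})$ involves only off-diagonal entries and is therefore unaffected by a purely diagonal shift. So $B_k \mapsto B_k + n\ell$, and the Brouwer deficit $B_k - S_k$ is replaced by $B_k - S_k + (n-k)\ell$. For every $1 \leq k \leq n-1$ the coefficient $n - k$ is strictly positive, so taking $\ell$ at least $\max_{1 \leq k \leq n-1}(S_k(G) - B_k(G))/(n-k)$ (a maximum of finitely many real numbers) secures $S_k \leq B_k$ for all such $k$ in the loop-augmented quiver.

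The only case this scheme does not cover is $k = n$, where the deficit is invariant under the shift. Here I would verify the bound by a direct trace computation, which is actually independent of the added loops: from $K = F^T F$ one obtains $S_n = \mathrm{tr}(K) = 2(m-L) + L = 2m - L$ (a non-loop edge contributes $2$, a loop contributes $1$), while $B_n = m + r + n(n+1)/2$. The inequality $S_n \leq B_n$ reduces to $m - L - r \leq n(n+1)/2$, and the left-hand side is exactly the number of unordered pairs $\{i,j\}$ joined by at least one non-loop edge, which is at most $\binom{n}{2} < n(n+1)/2$. So $S_n < B_n$ holds automatically, and together with the previous paragraph this gives (BC) for the quiver obtained from $G$ by attaching $\ell$ loops at every vertex, for any sufficiently large $\ell$.

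There is no real obstacle here: the whole content is the elementary observation that a uniform loop attachment is a scalar diagonal shift of $K$, so $S_k$ and $B_k$ transform linearly in $\ell$ with a deficit growing at rate $n - k$. The only subtlety is the degenerate case $k = n$, which is handled once and for all by the trace and edge-counting computation above, and one could equally phrase the required $\ell$ non-uniformly per vertex if desired, using the Hadamard formula in place of the cleaner identity $K + \ell I$.
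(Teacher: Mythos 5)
Your proof is correct and follows essentially the same route as the paper: a uniform loop attachment is the diagonal shift $K\mapsto K+\ell I$, so the deficit $B_k-S_k$ grows like $(n-k)\ell$, leaving only $k=n$. The paper simply cites $S_n\leq B_n$ as known at that point, whereas you verify it directly via the trace identity $S_n=2m-L$ and the count $m-L-r\leq\binom{n}{2}$; that computation is correct and makes the argument self-contained.
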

\begin{proof}
If $l$ loops are attached to each vertex, the
Kirchhoff matrix becomes $K+l I_n$ so that $S_k(G+L) = S_k(G) + l*k$ and 
$B_k(G+L)=B_k(G)+l*n$. If $l$ is large enough, then 
$S_k(G) + lk \leq B_k(G) + ln$ holds for all $k<n$. 
But for $k=n$, we have $S_n(G) \leq B_n(G)$ which is known. 
\end{proof} 

\paragraph{}
This motivates to stratify the conjecture and to ask for example, whether 
it is possible to prove the conjecture for all graphs with a generous
number of $l=1000$ loops attached at each vertex. 

\section{Large graphs}

\paragraph{}
Illustrated by the fact that we do not know
whether $S_3 \leq B_3$ in general, there appears to 
be a difficulty to establish the bound for small $k$. We have also sharp situations
for very dense graphs, as the case complete graphs $K_n$ illustrate, where $k=n-1$ is sharp. 
This suggests that we should take large enough graphs, but keep the 
edge degree bounded in order to be able to make a statement. 

\paragraph{}
Let us start with an interlacing result. If we remove an edge from the graph, the
number of vertices does not change, so that applying the Cauchy-interlace theorem appears
not possible. Removing an edge however produces a principal submatrix of
the 1-form Laplacian $K'=F F^T$ which is essentially 
isospectral to $K=F^T F$ and which is a $m \times m$ matrix if there are $m$ edges.
Let $\lambda_1 \geq \lambda_2 \cdots \lambda_n$ be the eigenvalues of $K$. 
If $H=G-e$ is a subgraph in which one of the edges has been removed, we get the
eigenvalues $\mu_1 \geq \mu_2 \cdots \mu_n$. 
We say that $\mu_l$ list is {\bf interlaced} with the $\lambda_l$ list, 
if $\lambda_1 \geq \mu_1 \geq \lambda_2 \geq \cdots \geq \mu_n=\lambda_n=0$. 

\begin{lemma}[See \cite{Godsil} theorem 13.6.2]
The eigenvalues of $K(G-e)$ interlace the eigenvalues of $K(G)$.  
\end{lemma}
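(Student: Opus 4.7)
The approach is to lift the problem from the $n \times n$ Kirchhoff matrix to the $m \times m$ 1-form Laplacian, where removing an edge corresponds to passing to a principal submatrix, and then to invoke the classical Cauchy interlace theorem.

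First I would observe that deleting an edge $e$ from $G$ simply erases the row of the gradient matrix $F$ indexed by $e$. Writing $F'$ for the resulting $(m-1) \times n$ matrix, one has $K(G-e) = (F')^T F'$ on the vertex side and, crucially,
\[
K'(G-e) = F'(F')^T
\]
on the edge side. This is precisely the principal $(m-1) \times (m-1)$ submatrix of $K'(G) = F F^T$ obtained by striking out the row and column indexed by $e$. This structural identification is the heart of the argument.

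Second, I would apply the Cauchy interlace theorem directly to $K'(G)$ and its principal submatrix $K'(G-e)$. Labelling the eigenvalues of $K'(G)$ and $K'(G-e)$ in non-increasing order as $\tilde\lambda_1 \geq \cdots \geq \tilde\lambda_m$ and $\tilde\mu_1 \geq \cdots \geq \tilde\mu_{m-1}$, this yields $\tilde\lambda_i \geq \tilde\mu_i \geq \tilde\lambda_{i+1}$ for $1 \leq i \leq m-1$. Then I would transfer this back to the Kirchhoff side using the standard fact that $F^T F$ and $F F^T$ share the same nonzero spectrum. The nonzero eigenvalues of $K(G)$ match the top nonzero eigenvalues of $K'(G)$, and likewise for $K(G-e)$; reading off the first $n$ entries of the Cauchy chain (appending the requisite zeros) delivers the claimed interlacing $\lambda_1 \geq \mu_1 \geq \lambda_2 \geq \cdots \geq \mu_n = \lambda_n = 0$.

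The main obstacle I expect is the bookkeeping of zero eigenvalues. When $e$ is a bridge, removing it increases the number of connected components and hence the nullity of the Kirchhoff matrix by one, so a previously nonzero eigenvalue of $K(G)$ must collapse to zero in $K(G-e)$. One has to verify that the $m$-dimensional Cauchy chain still forces the correct interlacing of the $n$-dimensional vertex spectra in this degenerate case. A short case distinction on whether $e$ is a bridge, combined with the trivial observation that zeros interlace with zeros, resolves this cleanly.
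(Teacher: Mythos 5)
Your proposal follows essentially the same route as the paper's ``supersymmetry proof'': pass to the 1-form Laplacian $K'=FF^T$, observe that deleting the edge $e$ yields the principal submatrix obtained by striking the row and column indexed by $e$, apply the Cauchy interlace theorem there, and transfer back via the essential isospectrality of $F^TF$ and $FF^T$. Your added care with the zero-eigenvalue bookkeeping (the bridge case) is a point the paper glosses over, but it does not change the argument.
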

\begin{proof}
Here is a new ``supersymmetry proof" 
Because $K=F^TF$ and $K'=F F^T$ are essentially isospectral, it is enough 
to show the interlacing for $K'$. But $K'(G)$ has the row and column by $e$
deleted, so that by the {\bf Cauchy interlace theorem}, the spectra of $K'(G)$
and $K'(G-e)$ are interlaced.
\end{proof} 

\paragraph{}
The general statement that the eigenvalues $\lambda_k(H) \leq \lambda_k(G)$ if $H$ 
is a subgraph of $G$ follows also from the {\bf Courant variational principle}. 
But more is true. The spectrum of a subgraph always interlaces the spectrum of the graph. 
One can ask whether the result $\lambda_j \leq d_j+d_{j+1}$ (decreasing eigenvalue
and vertex degree assumption), proven in \cite{Knill2024} can also be proven by induction
while removing edges. The answer is ``no" because removing an edge reduces some vertex degrees
rendering induction impossible. The extension of the result to quivers was necessary. 

\paragraph{}
The next theorem is a contribution to the Brouwer conjecture that
adds confidence that (BC) holds for all graphs.
Recall that $n$ is the number of vertices of the graph, while $m$ is the 
number of edges of the graph and $d_1$ is the {\bf maximal vertex degree}.

\begin{thm}
If $G$ is a connected graph for which $n \geq 4d_1^2$, then (BC) holds for $G$. 
\end{thm}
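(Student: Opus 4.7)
The plan is to combine the pointwise eigenvalue bound $\lambda_j \leq d_j + d_{j+1}$ from \cite{Knill2024} with the lower bound on $m$ coming from connectedness, and then to reduce (BC) to a purely scalar quadratic inequality in $k$.

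Concretely, summing the Knill2024 bound yields
$$ S_k = \sum_{j=1}^k \lambda_j \leq \sum_{j=1}^k (d_j + d_{j+1}) \leq 2 k d_1 $$
for every $k$. Since $G$ is connected, a spanning tree gives $m \geq n-1$, and the hypothesis $n \geq 4 d_1^2$ upgrades this to $m \geq 4 d_1^2 - 1$. The target inequality $S_k \leq B_k = m + k(k+1)/2$ is therefore implied by the purely numerical statement
$$ h(k) := 2 k d_1 - \frac{k(k+1)}{2} \leq m. $$

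Because $h$ is a concave parabola in $k$, it suffices to check this at its global maximum over $\mathbb{R}$, which is attained at $k^{\ast} = 2 d_1 - \tfrac{1}{2}$ with value $h(k^{\ast}) = 2 d_1^2 - d_1 + \tfrac{1}{8}$. The required inequality $2 d_1^2 - d_1 + \tfrac{1}{8} \leq 4 d_1^2 - 1$ simplifies to $2 d_1^2 + d_1 \geq \tfrac{9}{8}$, which is immediate for every $d_1 \geq 1$. This closes the argument for $1 \leq k \leq n$.

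I do not anticipate a substantive obstacle. The constant $4$ in the hypothesis $n \geq 4 d_1^2$ is essentially calibrated so that $m$ outruns the worst-case peak of $h$, and the critical $k^{\ast} \approx 2 d_1$ sits comfortably inside the admissible range $\{1,\dots,n\}$ when $n \geq 4 d_1^2$. The mild subtlety worth flagging is that the argument uses only the crude dominance $\lambda_j \leq 2 d_1$; a sharper version working with $\lambda_j \leq d_j + d_{j+1}$ directly, and with a better lower bound for $m$ than the spanning-tree one, could presumably lower the constant below $4$, but doing so would require extra bookkeeping with the full degree sequence and is not needed to obtain the stated threshold.
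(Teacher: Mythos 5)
Your proof is correct, but it follows a genuinely different and in fact more elementary route than the paper. The paper starts from a spanning tree, invokes the known fact that trees satisfy (BC) \cite{HMT2010}, and then adds the remaining edges one at a time, using the edge-deletion interlacing lemma to propagate (BC) through the chain $H_0 \subset H_1 \subset \cdots \subset G$; the case split there is $k \geq 2d_1$ (handled by interlacing plus the induction hypothesis) versus $k \leq 2d_1$ (handled by the crude bound $S_k \leq k\lambda_1 \leq 4d_1^2 \leq n \leq m+1$). You instead apply the uniform bound $\lambda_j \leq d_j + d_{j+1} \leq 2d_1$ to get $S_k \leq 2kd_1$ for every $k$, use connectedness only through $m \geq n-1 \geq 4d_1^2-1$, and reduce everything to maximizing the concave quadratic $h(k)=2kd_1-k(k+1)/2$, whose peak value $2d_1^2-d_1+\tfrac18$ is safely below $4d_1^2-1$; the arithmetic checks out, and the bound $\max_k h(k)$ over all of $\mathbb{R}$ of course dominates the maximum over $\{1,\dots,n\}$. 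What your argument buys is the elimination of three ingredients the paper needs (the tree result, the interlacing lemma, and the induction over edge additions), and it makes transparent that the true requirement is only $m \geq 2d_1^2 - d_1 + 1$, so the hypothesis could be stated more sharply in terms of $m$ rather than $n$; what the paper's approach buys is machinery (interlacing, snap reduction, the class $\mathcal{G}_\sigma$) that it reuses elsewhere, e.g.\ for the Brouwer-threshold corollary and the proposition on enlarging (BC) graphs. One small point worth adding to your write-up: the theorem is stated for quivers as well, but your argument transfers directly since $\lambda_j \leq d_j+d_{j+1}$ is proved in \cite{Knill2024} for all quivers and the quiver Brouwer bound $m+r+k(k+1)/2$ only exceeds $m+k(k+1)/2$.
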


\begin{proof} 
We can first assume that $G$ is a finite simple graph, then use that we 
can upgrade the result from graphs to quivers. 
Take a spanning tree $H_0 \subset G$. This requires that $G$ is connected. 
We know (BC) holds for all trees \cite{HMT2010}. 
We will now produce a sequence of graphs 
$H_0 \subset H_1 \subset \cdots \subset H_{m-n+1}=G$,
where going from $A=H_l$ to $B=H_{l+1}$ is done by adding an edge of $G$. 
We show that if $A=H_L$ satisfies (BC), then $B=H_{l+1}$ satisfies (BC). 
Now use the above lemma which assures that if $A$ has eigenvalues 
$\mu_1 \geq \mu_2 \geq \cdots \geq \mu_n=0$
and $B$ has eigenvalues $\lambda_1 \geq \lambda_2 \geq \cdots \geq \lambda_n=0$ then 
$\lambda_1 \geq \mu_1 \geq \lambda_2 \geq \mu_2 \cdots \geq \lambda_n=\mu_n=0$.
Especially $\mu_j \geq \lambda_{j+1}$. \\
{\bf First case (i)} $k \geq 2 d_1$: this implies $k \geq \lambda_1$.
\begin{eqnarray*} 
     S_k(B) &=& \lambda_1 + (\lambda_2 + \lambda_3 + \cdots + \lambda_k) \\
            &\leq& \lambda_1 + (\mu_1  + \mu_2 + \cdots \mu_{k-1}) \\
            &\leq& \lambda_1 + m(A) + k(k-1)/2  \\
            &=   & \lambda_1 -k + m(A) + k(k+1)/2 \\
            &\leq&  m(A) + k(k+1)/2 \\
            &<&     m(B) + k(k+1)/2 = B_k(B)  \; . 
\end{eqnarray*}
{\bf Second case (ii)} $k \leq 2 d_1$: Then $k \lambda_1 \leq (2 d_1)^2$ and
\begin{eqnarray*}
      S_k(B) &=& \lambda_1+\lambda_2 + \cdots + \lambda_k \\
             &\leq&  k \lambda_1    \\
             &\leq&  (2 d_1) \lambda_1 \\
             &\leq&  (2 d_1) (2 d_1) \\
             & =&    4 d_1^2  \\
             &\leq&  n  \\
             &\leq&  m+1 \\
             &\leq&  m+k(k+1)/2= B_k(B)  \; . 
\end{eqnarray*} 
So, also $B=H_{l+1}$ satisfies (BC). We continue as such until 
$H_{m-n+1} = G$ is reached and see that $G$ satisfies (BC).
\end{proof} 

\paragraph{}
{\bf Examples.} \\
a) In the case of a {\bf cyclic graph} $C_n$, we have $m=n$ and $d_j=2$ 
and the result works for $n \geq 4*2^2=16$. 
Cyclic graphs are well known to be in (BC): the case $n=2,n=3$ is covered
with complete graphs. For $n \geq 4$, we can use that $1+2k \leq k(k+1)/2$. 
Now, $\lambda_j \leq 4 \sin(\pi j/(2n))^2$ has the explicit sum
$S_k = 1+2k-\csc(2n/\pi) \sin((1+2k)\pi/(2n))
     \leq 1+2k \leq k(k+1)/2 \leq n +k(k+1)/2 = B_k$.  \\
b) For a general {\bf triangle free graph}, the Barycentric refinement 
doubles $m$ but keeps the maximal vertex degree constant. So, 
after doing more than $\log_2(4d_1^2/m)$ refinement steps to a triangle free
graph, we are in (BC). \\
c) For a vertex {\bf regular graph} of degree $d$, we have $nd=2m$. 
Now $4d^2=4 (2m)^2/n^2=16m^2/n^2 \leq n$ if $16m^2 \leq n^3$. \\
\footnote{Alan Lew informed me that Brouwer's conjecture is known to hold for 
regular graphs, by work of Mayank \cite{Mayank2010} and Berndsen \cite{Berndsen2010} 
in their M.Sc. Theses.} \\
d) For a random graph in the Erdoes-Renyi space $E(n,p)$, the 
vertex degree expectation is about $d=pn$ so that we need $4p^2n^2<n$
or $p<1/\sqrt(2n)$ in order that the theorem can be applied for large $n$.
There are results about random graphs in \cite{Rocha2019,Cooper2021}. 
Rocha showed that (BC) holds asymptotically almost surely. \\
e) Two discrete manifolds can be connected via a {\bf connected sum}.
In the special case when doing a connected sum with a spheres 
we get a notion of homeomorphism. If we do connected sums
using manifolds with a global upper bound on the vertex degree, then 
taking a sufficiently large manifold of this type will get us into the class (BC). \\
f) For a {\bf discrete 2-manifold} we can make a soft Barycentric refinement
By taking the union of triangles and vertices as new vertices and 
connect two if one is contained in the other or if the intersection 
is an edge. This operation does not increase the maximal vertex degree
if the maximal vertex degree is already 6 or more.

\section{The snap reduction} 

\paragraph{}
We have just seen that edge removal produces interlacing eigenvalues.  
There is a snap construction within the class of quivers,
which has the effect of removing a vertex and adding loops. This operation
was used in \cite{Knill2024}. It is somehow dual to the edge removal because 
the number of edges does not change, while the number of vertices decreases 
by $1$. 

\paragraph{}
Assume $G$ is a finite simple graph or more generally a quiver
without multiple connections. If we remove a vertex from $G$ and snap 
the edges to the nearest connections so that edges morph to loops,
we end up with a quiver $H=G-v$ with $n-1$
vertices for which the total edge sum $m$ of edges remains the same 
if no loop was present at $v$ and where the total edge sum $m$ has 
decreased to $m-l$ if $l$ loops had been present at $v$.  
We call $G-v$ the {\bf snap reduction} of $G$. 

\paragraph{}
An observation in \cite{Knill2024} was that the snap reduction has the 
effect that the Kirchhoff matrix $K(G-v)$ is a principal 
sub-matrix of the Kirchhoff matrix $K(G)$ of $G$. 
The {\bf Cauchy interlace theorem} then relates the eigenvalues of 
$H=G-v$ and $G$. In the Brouwer we can use the snap reduction
if $k$ is larger than the spectral radius. 
If $k$ is smaller than the spectral radius, we have to bound $k$ terms each 
smaller or equal than the spectral radius.

\begin{lemma}[Snap lemma]
Let $G$ be a quiver without multiple connections and let $H=G-v$ be a snap reduction
of $G$. Then $B_k(H) \leq B_k(G)$ and $S_k(G) \leq B_k(G) + (\lambda_1-k)$
and especially $S_k(G) \leq B_k(G)$ for all $k \geq \lambda_1(G)$. 
Also $\lambda_1(H) \leq \lambda_1(G)$. 
\end{lemma}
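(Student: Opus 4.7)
The lemma rests on two ingredients already in play: the structural observation (recorded in the preceding paragraph and in \cite{Knill2024}) that $K(H)$ is a principal submatrix of $K(G)$, and the Cauchy interlace theorem.

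First I would verify the structural input directly. Each non-loop edge $(v,w)$ incident to $v$ becomes a loop at $w$ after the snap; under the convention that every loop contributes $1$ to the vertex-degree matrix $D$, the diagonal entry at $w$ is preserved, and off-diagonal entries among vertices $\neq v$ are untouched. Hence $K(H)$ is exactly the principal $(n{-}1)\times(n{-}1)$ submatrix of $K(G)$ obtained by striking the row and column of $v$. A quick edge count gives $m(H) = m(G) - l_v$, where $l_v$ is the number of loops at $v$, and since the snap creates no multiple edges between distinct vertices the formula $r = \sum_{i<j}(\operatorname{sign}(K_{ij}) - K_{ij})$ yields $r(H) = r(G) = 0$. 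The first clause $B_k(H) \le B_k(G)$ is immediate.

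Next I would invoke Cauchy interlacing: writing $\lambda_1 \ge \cdots \ge \lambda_n$ and $\mu_1 \ge \cdots \ge \mu_{n-1}$ for the eigenvalues of $K(G)$ and $K(H)$, we have $\lambda_j \ge \mu_j \ge \lambda_{j+1}$ for every admissible $j$. The case $j=1$ yields $\lambda_1(H) \le \lambda_1(G)$. Summing the inequalities $\lambda_{j+1} \le \mu_j$ for $j = 1, \dots, k-1$ gives the key telescoping
\begin{equation*}
 S_k(G) - \lambda_1 \;=\; \sum_{j=2}^{k} \lambda_j \;\le\; \sum_{j=1}^{k-1} \mu_j \;=\; S_{k-1}(H).
\end{equation*}
Combining with the (inductive) hypothesis $S_{k-1}(H) \le B_{k-1}(H)$ and with the first clause rewritten as $B_{k-1}(H) \le B_k(G) - k$ (both sides equal $m(\cdot) + k(k-1)/2$), I conclude $S_k(G) \le \lambda_1 + B_k(G) - k = B_k(G) + (\lambda_1 - k)$. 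The ``especially'' clause is then automatic: when $k \ge \lambda_1$ the correction $\lambda_1 - k$ is non-positive, so $S_k(G) \le B_k(G)$.

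The main obstacle is conceptual rather than computational: the inequality $S_k(G) \le B_k(G) + (\lambda_1 - k)$ is \emph{not} unconditional but relies on (BC) being already known for the smaller quiver $H$ at the previous index $k-1$. In the anticipated use of the lemma this is supplied by an induction on the number of vertices, since $H$ has one fewer vertex than $G$. The lemma is therefore best read as an inductive step that propagates (BC) from $H$ to $G$ in the favorable regime $k \ge \lambda_1(G)$. Everything else is bookkeeping: the principal-submatrix identification is the sole nontrivial structural input, and it is exactly what makes Cauchy interlacing feed into the Brouwer bound.
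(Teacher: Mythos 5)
Your proposal is correct and follows essentially the same route as the paper: the principal-submatrix observation plus Cauchy interlacing gives $S_k(G)\leq \lambda_1+S_{k-1}(H)$, and the chain $S_{k-1}(H)\leq B_{k-1}(H)=B_k(H)-k\leq B_k(G)-k$ is exactly the paper's computation. Your explicit remark that the middle inequality is not unconditional but requires (BC) for $H$ at index $k-1$ (supplied by induction on the number of vertices) is a correct and worthwhile clarification of what the paper leaves implicit in the phrase ``by induction.''
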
 

\begin{proof}
If we remove a simple connection from $v$ and snap it to a loop at 
$w$, then the edge number $m$ either stays the same or decreases by the number of 
loops at $m$. Therefore, also $B_k=m+k(k+1)/2$ stays the same or becomes smaller. 
The Kirchhoff matrix of the snapped graph is a {\bf principal sub-matrix} 
of the Kirchhoff matrix of $G$. 
This was true also if the vertex $v$ has loops.
Let $\mu_j$ denote the eigenvalues of the snapped graph $H=G-v$
Then, using $B_k(H) = B_{k-1}(H) +k$ and $B_k(H) \leq B_k(G)$ 
(following $m(H)\leq m(G)$), we have by induction and using $k \geq \lambda_1$: 
$$ S_k(G) = \sum_{j=1}^n \lambda_j \leq \lambda_1 + S_{k-1}(H) \leq \lambda_1 + 
           B_{k-1}(H) = \lambda_1 -k + B_{k}(H)  
          \leq \lambda_1 - k + B_k(G)  \leq B_k(G) \; . $$ 
\end{proof} 

\paragraph{}
Define $\mathcal{G}_{\sigma}$ as the set of quivers 
with spectral radius $\leq \sigma$.

\begin{lemma}
The class $\mathcal{G}_{\sigma}$ is invariant under snap reduction
and edge removal. 
\end{lemma}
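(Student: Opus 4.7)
The plan is to verify the two closure properties separately, and each will follow as a one-line consequence of an interlacing statement already established in the excerpt. No new linear algebra is needed; the whole content is bookkeeping the inequality $\lambda_1(H)\leq \lambda_1(G)$ in each of the two reduction operations.

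For snap reduction, let $G\in\mathcal{G}_\sigma$ and let $H=G-v$. The Snap lemma proved immediately above already records that $\lambda_1(H)\leq \lambda_1(G)$ — this is exactly the Cauchy interlace theorem applied to the principal submatrix relation $K(H)\subset K(G)$. Chaining with $\lambda_1(G)\leq \sigma$ gives $H\in\mathcal{G}_\sigma$. For edge removal, let $H=G-e$. The interlacing lemma (Godsil, Theorem 13.6.2, with the supersymmetry proof given earlier) says that the eigenvalues $\mu_j$ of $K(H)$ interlace the eigenvalues $\lambda_j$ of $K(G)$, so in particular $\mu_1=\lambda_1(H)\leq \lambda_1(G)\leq\sigma$, and again $H\in\mathcal{G}_\sigma$.

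There is essentially no obstacle. The only small point worth flagging in the write-up is that both the snap lemma and the edge-removal interlacing were stated for quivers without multiple connections, whereas $\mathcal{G}_\sigma$ is defined for arbitrary quivers. For snap reduction this restriction is genuine (as used in the previous lemma). For edge removal it is harmless: the supersymmetry argument $K=F^T F$ and $K'=F F^T$ works verbatim when $F$ is the quiver gradient, so one can equally well remove an edge that is a loop or a redundant parallel edge, and still obtain $\lambda_1(G-e)\leq \lambda_1(G)$ from Cauchy interlacing on $K'$.
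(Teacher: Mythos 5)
Your proof is correct and follows essentially the same route as the paper: in both reductions the spectrum of the reduced object interlaces that of the original (Cauchy interlacing for the snap reduction via the principal-submatrix relation, and the supersymmetry/interlacing lemma for edge removal), so the spectral radius cannot increase and membership in $\mathcal{G}_\sigma$ is preserved. Your extra remark about the multiple-connection caveat is a careful addition but does not change the argument.
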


\begin{proof} 
In both cases, the spectrum of the reduced graph is interlaced, 
the spectral radius of $H=G-v$ is smaller or equal than the 
spectral radius of $G$. So, if $G \in \mathcal{G}_{\sigma}$ then 
$H \in \mathcal{G}_{\sigma}$. 
\end{proof} 

\paragraph{}
Here is a way to enlarge graphs for which (BC) holds: if we take a graph $H$
satisfying (BC) and which has enough edges, then we can extend as long
as we do not increase the maximal vertex degree. The reason is that we 
keep an upper bound on the spectral radius. 

\begin{propo}
If (BC) holds for a connected graph $H$ with $m(H)>4d_1(H)^2$ 
and $H$ is a subgraph of $G$ with $d_1(G) \leq d_1(H)$, 
then (BC) also holds for G.
\end{propo}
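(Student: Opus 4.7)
The plan is to imitate the proof of the earlier large-graph theorem, but drive the induction from $H$ rather than from a spanning tree. First I would fix an ordering of the edges in $E(G) \setminus E(H)$ and define a chain $H = H_0 \subset H_1 \subset \cdots \subset H_N = G$ with $H_{l+1}$ obtained from $H_l$ by adding one edge of $G$. The base case is handed to us by assumption, so the task reduces to showing the inductive step: if $A = H_l$ is in (BC), then so is $B = H_{l+1}$. This is exactly the shape of the argument already given in the connected large-graph theorem, so I will reuse the interlacing inequality of the snap/edge-removal lemma, which gives $\mu_j \geq \lambda_{j+1}$ when $A$ has eigenvalues $\mu_j$ and $B$ has eigenvalues $\lambda_j$.

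Next I would set $D := d_1(H)$ and record the key invariants propagating through the chain. Because every $H_l$ is a subgraph of $G$ and $d_1(G) \leq D$, we have $d_1(H_l) \leq D$ throughout. Combining with the bound $\lambda_k \leq d_k + d_{k+1}$ from \cite{Knill2024} gives the uniform spectral radius estimate $\lambda_1(H_l) \leq 2D$. We also have $m(H_l) \geq m(H) > 4D^2$ for all $l$, since edges are only added.

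With these invariants in hand, the inductive step splits into two cases exactly as before. In case $k \geq 2D$, we have $k \geq \lambda_1(B)$, and the telescoping computation
\begin{align*}
S_k(B) &= \lambda_1 + \sum_{j=2}^{k} \lambda_j \leq \lambda_1 + \sum_{j=1}^{k-1} \mu_j \\
       &\leq \lambda_1 + B_{k-1}(A) = \lambda_1 - k + m(A) + k(k+1)/2 \\
       &\leq m(A) + k(k+1)/2 \leq m(B) + k(k+1)/2 = B_k(B)
\end{align*}
carries (BC) from $A$ to $B$ via the inductive hypothesis. In the complementary case $k < 2D$, the uniform bound $\lambda_1 \leq 2D$ gives
\begin{align*}
S_k(B) \leq k \lambda_1 \leq (2D)(2D) = 4D^2 < m(H) \leq m(B) \leq B_k(B),
\end{align*}
which again concludes the step.

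The only genuinely delicate point is the uniform bound $d_1(H_l) \leq D$, and this is where the hypothesis $d_1(G) \leq d_1(H)$ is used decisively: without it, the inductive chain could transiently develop a high-degree vertex that inflates $\lambda_1$ and wrecks case (ii). Everything else is a repackaging of the interlacing + two-case argument already used for the $n \geq 4d_1^2$ theorem, with $m(H) > 4d_1(H)^2$ now playing the role that $n \geq 4d_1^2$ played before. The quiver extension of the proposition, if desired, is then immediate from the theorem that lifts (BC) from finite simple graphs to quivers.
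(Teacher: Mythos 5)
Your proof is correct, and it takes a genuinely different route from the paper's. The paper argues directly on $G$: for $k\leq\lambda_1(G)$ it uses the crude bound $S_k\leq k\lambda_1\leq\lambda_1^2\leq m$ (with $\lambda_1\leq 2d_1$ supplying $m>4d_1^2\geq\lambda_1^2$), and for $k\geq\lambda_1(G)$ it appeals to the snap lemma (vertex removal and Cauchy interlacing) --- a step the paper itself flags as problematic, since the snapped graph loses edges and the induction hypothesis degrades. You instead run the edge-addition induction of the $n\geq 4d_1^2$ theorem, but seeded at $H$ rather than at a spanning tree: the chain $H=H_0\subset\cdots\subset H_N=G$ preserves the two invariants $d_1(H_l)\leq d_1(G)\leq d_1(H)$ and $m(H_l)\geq m(H)>4d_1(H)^2$, so both cases of the two-case argument close at every step. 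This buys a self-contained argument in which the hypothesis ``(BC) holds for $H$'' enters cleanly as the base case, whereas in the paper's version its role is murky; it also makes transparent why $d_1(G)\leq d_1(H)$ is needed, as you point out. One small point to add: if $H$ is not a spanning subgraph of $G$, you should first adjoin the missing vertices of $G$ as isolated vertices (this appends zero eigenvalues and changes neither $S_k$ nor $B_k$, so (BC) is preserved) before starting the edge-addition chain; otherwise the interlacing step compares spectra of matrices of different sizes. With that remark, your argument is complete.
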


\begin{proof}
If $G$ is a quiver with $m$ vertices and spectral radius $\lambda_1$.
If $m \geq \lambda_1^2$, then $G$ satisfies (BC) if $H$ satisfies $BC$.
(i) If $k \leq \sigma(G)=\lambda_1$ then
$D_k=\lambda_1 + \cdots + \lambda_k \leq k \sigma(G) 
  \leq \sigma(G)^2 \leq m \leq m+k(k+1)/2=B_k$.  \\
(ii) In the case $k \geq \sigma(G) = \lambda_1$, we use the snap lemma.
Let $G$ be a quiver with $n+1$ vertices. Take any vertex $v$ in $G$ and snap it to get a 
graph $H=G-v$. The new graph $H$ has a Kirchhoff matrix with interlaced spectrum
so that the induction step works also in the case $k \geq \sigma =\lambda_1$. The problem 
is that the snapped graph has in general less edges so that $m(H) \geq \lambda_1(H)^2$ does
not hold. 
\end{proof} 

\paragraph{}
Here is a question. A {\bf 2-manifold} is a finite simple graph such that
every unit sphere is a circular graph with $4$ or more elements. 
A {\bf 2-manifold with boundary} is a finite simple graph for which 
every unit sphere is either a circular graph with $4$ or more elements or then
a path graph of length 2 or more. An example of a 2-manifold with boundary 
is to take a boundary-less graph and remove a vertex. It would be nice to 
know whether all q-manifolds are in (BC). We so far only know that it holds
for large enough manifolds provided that the curvature stays bounded below. 

\section{Brouwer threshold} 

\paragraph{}
Define the {\bf Brouwer threshold} $s$ as the largest 
number $s$ such that the Brouwer estimate $S_k(G) \leq B_k(G)$ holds
for all $k \leq s$ for all finite simple graphs.  This threshold is a {\bf global
number} that only depends on the progress of research. The Brouwer conjecture is
equivalent to the statement that $s = \infty$. As for now (2025), we only know $s=2$. 
That the Brouwer threshold $s$ can be pushed up in the future is a reasonable 
expectation. Also the next statement is a motivation to increase the threshold. 
It does not invoke any condition in the number of edges.

\begin{coro}
If $s$ is the Brouwer threshold. For all quivers with 
$\lambda_1 \leq s$, the Brouwer conjecture holds.
\end{coro}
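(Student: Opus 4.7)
The plan is to split the inequality $S_k(G) \leq B_k(G)$ into the two ranges $k \leq s$ and $k > s$, and in each range to combine the extension theorem (Theorem 2) with the snap lemma. The hypothesis $\lambda_1(G) \leq s$ is used only in the second range; the first range holds unconditionally.

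For $k \leq s$ the inequality is almost the definition of $s$, but the threshold is phrased for finite simple graphs, so I would first lift it to arbitrary quivers using the Hadamard perturbation argument of Theorem 2. Concretely, strip the loops and redundant edges off $G$ to obtain a finite simple graph $G_0$, invoke $S_k(G_0) \leq B_k(G_0)$ from the definition of the Brouwer threshold, and reinsert the loops and multiple connections one at a time. Each insertion raises $S_k$ by at most $1$ (loop) or $2$ (multiple connection) and raises $B_k$ by exactly the same amount, so the inequality is preserved. This regime never touches the hypothesis on $\lambda_1$.

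For $k > s$ the hypothesis gives $k > \lambda_1(G)$, which is precisely the regime where the snap lemma yields $S_k \leq B_k$. I would first use Theorem 2 again, in the same Hadamard direction, to reduce to a quiver $G'$ without multiple connections (this preserves $\lambda_1(G') \leq \lambda_1(G) \leq s$), and then induct on $n = |V(G')|$. The base $n = 1$ is trivial. For the step, snap a vertex $v$ to obtain $H = G' - v$, still without multiple connections, of order $n-1$. Since $K(H)$ is a principal submatrix of $K(G')$, the Cauchy interlace theorem gives $\lambda_1(H) \leq \lambda_1(G') \leq s$, so $H$ lies in the inductive class. The bound $S_{k-1}(H) \leq B_{k-1}(H)$ then comes either from the first regime (if $k-1 \leq s$) or from the inductive hypothesis on $H$ (if $k-1 > s$). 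Substituting into the snap-lemma chain
\[
S_k(G') \leq \lambda_1(G') + S_{k-1}(H) \leq \lambda_1(G') + B_{k-1}(H) = \lambda_1(G') - k + B_k(H) \leq \lambda_1(G') - k + B_k(G') \leq B_k(G')
\]
closes the induction, and Theorem 2 transports the result back from $G'$ to $G$.

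The main obstacle is not conceptual but organizational. The Brouwer threshold is defined for finite simple graphs, the snap lemma is stated for quivers without multiple connections, and the inductive step sends a finite simple graph to a quiver with loops; so one has to invoke Theorem 2 in both directions at just the right moments, and phrase the induction over the class of quivers without multiple connections (rather than only finite simple graphs) to ensure that the snapped graph $H$ stays in the inductive class. Once that bookkeeping is set up, the low-$k$ and high-$k$ arguments interlock cleanly without any new estimate.
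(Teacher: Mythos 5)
Your proposal is correct and follows essentially the same route as the paper: the threshold handles $k\leq s$, the snap lemma combined with the invariance of the class $\mathcal{G}_{\sigma}$ under snap reduction handles $k>s\geq\lambda_1$ via induction on the number of vertices, and the Hadamard/perturbation upgrade transports between finite simple graphs and quivers. In fact your writeup supplies the case split and bookkeeping (in particular, closing the induction in the snap-lemma chain via either the threshold or the inductive hypothesis on $H$) that the paper's two-sentence proof leaves implicit.
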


\begin{proof}
This also immediately follows from combining the two lemmas 
and using induction in the class $\mathcal{G}_{\sigma}$
using that once we have established Brouwer for all finite simple 
graphs of order $n$, then it holds for all quivers of order $n$. 
As for the induction assumption, the claim holds for $n=1$,
where it holds unconditionally. 
\end{proof} 

\section{Signless Laplacian}

\paragraph{}
The {\bf signless Laplacian} $|K|$ is defined as $|K|=D+A$ or 
$|K|_{ij}=|K_{ij}|$. If $K=F^T F$, then $|K|=|F|^T |F|$. The 1-form version 
is $K'=|F| |F^T|$. If we place $F$ into the lower sub-diagonal of a 
$(n+m) \times (n+m)$ 
matrix which is zero everywhere else, we call it the {\bf exterior derivative} 
$d$. Now define the {\bf Dirac matrix} $D=d+d^*$ which is a symmetric 
$(n +m) \times (n+m)$ matrix. Since $d^2=0$ the matrix $H=D^2=(d+d^*)^2
= d d^* + d^* d$ is block diagonal and has in the first block a $n \times n$
Kirchhoff matrix $K=F^T F$. The second block is the $m \times m$ matrix 
$K_1 = F F^T$. The Hodge Laplacian is $H=K \oplus K_1$. The two blocks are 
essentially isospectral, meaning that their non-zero eigenvalues agree. 

\paragraph{}
If we replace $d$ with $|d|$ and $d^*$ with $|d^*|$ the same analysis
works but now $|H| = |K| \oplus |K_1|$ and the two matrices
$|K|$ and $|K_1|$ are still essentially isospectral. (Note that the sign-less
Laplacian in higher dimensions is no more block diagonal as $d^2=0$ does not hold
any more then.)
It is known that the spectral radius of $|K|$ is larger or equal than the 
spectral radius of $K$, but there is no definite inequality between the 
smaller eigenvalues. The eigenvalues of $H$ and $|H|$ are very close in general.
Because $|H|=|D|^2$ also $|K|$ has only non-negative eigenvalues.
One can now formulate the Brouwer conjecture $BC^+$ for sign-less Laplacians. 
See \cite{Cooper2021}. 

\paragraph{}
The vertices and edges of a graph form a one-dimensional simplicial 
complex with $n+m$ simplices $x$. We can look at the 
{\bf connection matrix} $L$ which is defined as $L_{xy}=1$ if and only 
if the two simplices $x,y$ intersect. In \cite{KnillEnergy2020} was noted
that the 0-1 matrix $L$ is {\bf unimodular} meaning that its determinant
is either $1$ or $-1$. More precisely, the inverse matrix $g=L^{-1}$ has entries
$w(x) w(y) \chi(U(x) \cap U(y)$, where $w(x)=(-1)^{{\rm dim}(x)}$ 
and $U(x) = \{ y, x \subset y \}$ is the {\bf star} of $x$. 
For example, if $x=y$ is a vertex, then the diagonal entry 
$g_{xx} = 1-{\rm deg}(x)$ is related to the vertex degree ${\rm deg}(x)$.
We also proved $\sum_{x,y} g(x,y) = \chi(G)$ and that the number of positive
eigenvalues of $L$ minus the number of negative eigenvalues is also $\chi(G)$.
All this could be generalized to energized complexes \cite{GreenFunctionsEnergized}.

\paragraph{}
The connection Laplacian and the Hodge Laplacians are 
defined in any dimension, for any finite abstract simplicial complex.
In one dimensions, we have the {\bf hydrogen identity} $L-L^{-1} = |H|$. 
\cite{Hydrogen}.
The eigenvalues $\lambda_j$ of $|H|$ are now related to the 
eigenvalues $\mu_j$ of $L$ by $\lambda_j = \mu_j-1/\mu_j$. 
If the eigenvalues of $|H|$ are ordered $\lambda_1 \geq \lambda_2 \geq ...$
then the eigenvalues of $L$ are ordered $\mu_1 \geq \mu_2 \geq ...$ as long as
the $\lambda_j>1$. 

\paragraph{}
We made use of the hydrogen identity to estimate the spectral radius of 
$|H|$ and so the spectral radius of $K$ from above. 
We have $\lambda_1(K) \leq max(\lambda_1(L), \lambda_1(g)$
because $\lambda_1(K) \leq \lambda_1(|K|) = \lambda_1(|H|) = 
     \lambda_1(L)-\lambda_1(L)^{-1}$. 
If $H$ be a subgraph 
of $G$ in which one of the edges has been deleted. Let $L(H)$ be the
connection Laplacian of $H$ and $L(G)$ the connection Laplacian of $G$. 

\begin{lemma}
Let $H$ be a subgraph of $G$, then 
$L(H)$ is a principle submatrix of $L(G)$. 
The eigenvalues of $L(H)$ are therefore interlaced 
with the eigenvalues of $L(G)$. 
\end{lemma}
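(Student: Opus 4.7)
The plan is to reduce the statement to the Cauchy interlace theorem, just as was done for the Kirchhoff Laplacian earlier in the paper. The only thing to check is the purely combinatorial claim that $L(H)$ sits inside $L(G)$ as a principal submatrix; once that is in place, the interlacing conclusion is automatic.

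First I would fix the index sets. Writing $\mathcal{S}(G)$ for the set of simplices of $G$ (that is, its vertices together with its edges viewed as $2$-element sets), the hypothesis that $H$ is a subgraph of $G$ means $V(H)\subseteq V(G)$ and $E(H)\subseteq E(G)$, hence $\mathcal{S}(H)\subseteq \mathcal{S}(G)$. Order the rows and columns of $L(G)$ so that the simplices of $H$ come first; we want to identify the top-left $|\mathcal{S}(H)|\times|\mathcal{S}(H)|$ block with $L(H)$.

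Next I would check entry-by-entry that the restriction is literally $L(H)$. By the definition recalled above, $L(G)_{xy}=1$ iff $x\cap y\ne\emptyset$ as sets of vertices, and $0$ otherwise. This condition is intrinsic to the pair $(x,y)$ and does not depend on whether we ambiently sit in $H$ or $G$, so for $x,y\in\mathcal{S}(H)$ we have $L(G)_{xy}=L(H)_{xy}$. Thus $L(H)$ is indeed a principal submatrix of $L(G)$. Applying the Cauchy interlace theorem to the symmetric $0$-$1$ matrix $L(G)$ and this principal submatrix yields the claimed interlacing of eigenvalues.

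The only thing one has to be careful about is the notion of ``subgraph'' in use here: if one allowed $H$ to carry edges between vertices of $H$ that are not edges of $G$, the argument would fail, since new edges would introduce new simplices not contained in $\mathcal{S}(G)$. Under the standard convention that a subgraph uses only vertices and edges already present in $G$, there is no obstacle and the proof is essentially immediate from the definitions.
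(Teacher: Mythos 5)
Your proposal is correct and follows essentially the same route as the paper: the paper's one-line proof ("any edge can only interact with itself or with vertices") is just a terse way of saying that the entries $L_{xy}$ depend only on whether the simplices $x,y$ intersect, so restricting to the simplices of $H$ yields $L(H)$ as a principal submatrix, after which Cauchy interlacing applies. Your version simply spells out the index bookkeeping that the paper leaves implicit.
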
 

\begin{proof} 
Any edge can only interact with itself or with vertices. 
\end{proof} 

\paragraph{}
This generalizes to higher dimensional simplicial complexes in that 
if $H$ is a subcomplex of $G$, there is interlacing. 
It follows that the connection spectral radius of a subcomplex $H$
of $G$ is smaller or equal than the connection spectral radius of $G$. 
We see so far in experiments that the spectral radius of $g=L^{-1}$ is 
smaller or equal than the spectral radius of $L$. We do not know 
whether this is true in general. 

\section{Brouwer-Haemers}

\paragraph{}
For quivers without multiple connections, we had proven in 
\cite{Knill2024} a Brouwer-Haemers lower bound \cite{BrouwerHaemers2008} 
(written there when the eigenvalues were ordered in increasing order) 
$\lambda_j \geq d_j-(n-j)$ which 
implies in the decreasing order $\lambda_j \geq d_j-j+1$.
See also \cite{Brouwer} Proposition 3.10.2, where the decreasing 
order with the stronger $d_j-j+2$ is used, but which required some condition on the graph.)
The work of \cite{Brouwer,BrouwerHaemers2008,GroneMerrisSunder2,LiPan1999,Guo2007},
had an increased lower bound, which needed however cases like $G=K_n$. 
The slightly weaker $\lambda_j \geq d_j-j+1$ was proven with the 
snap induction proof and is true unconditionally and holds also in the quiver case
without multiple connections. 

\paragraph{}
In our earlier stage of the work we called $m + k(k+1)$ the 
{\bf Brouwer-Haemers upper bound}. For quivers with multiple connections, the adaptation 
is $m+r+k(k+1)$. Alan Lew noticed that our proof of $S_k \leq m+k(k+1)$ was incorrect. 
He also informed us of \cite{Lew2025} with the stronger $S_k \leq H_k$ with $H_k = m+k^2$
after an earlier $S_k \leq m+k^2+15k \log(k) +65k$ \cite{Lew2024}. 
We call the $H_k$ and its adaptation for quivers with redundancy $H_k=m+r+k^2$ 
now the {\bf Lew bound}. 

\paragraph{}
When looking at general graphs with loops 
but without multiple connections, the edge degree bound was 
$$  \lambda_j \leq d_j + d_{j+1} \leq 2d_j  \; . $$
This implies $S_k \leq 2 D_k$ given in \cite{Knill2024}.
Experiments show that this in general competes with the Brouwer estimate. 
There is in general a middle interval of $k$ values, where one of the two estimates is better. 
In \cite{Knill2024} we had opted to arrange the 
eigenvalues in increasing order as in Riemannian geometry.
In the context of the Brouwer conjecture most of the literature take a decreasing order. 

\paragraph{}
If $\overline{\lambda}_j$ are the eigenvalues of the graph complement 
$\overline{G}$, then $\lambda_k+ \overline{\lambda}_{n-k+1}=n$ and
$\sum_{j=1}^k \lambda_j 
+ \sum_{j=1}^{k} \overline{\lambda}_{n-j+1} =nk$ so
that the Brouwer estimate also gives a bound on the 
smallest $k$ eigenvalues of the graph complement. See \cite{Chen2019}.

\paragraph{}
In general, the vertex degree estimate $S_k \leq 2 D_k$
gives better estimates for small or large energies, 
while the Brouwer estimate $S_k \leq B_k$
estimate is better in the middle of the spectrum. 
It is worth mentioning that 
the sequence list $k \to m+k(k+1)/2$ is concave up, while the lists of 
$k \to D_k = \sum_{j=1}^k d_j$ and 
$k \to S_k(G)= \sum_{j=1}^k \lambda_j$ 
are both concave down.  If the $c S_k$ for some $c \in (1,2)$ is tangent 
to $B_k$, then the estimate $S_k \leq B_k$ holds for all $k$!

\paragraph{}
If $m \to \infty$, keeping the loop numbers as equal as possible, 
the function $k \to m+k(k+1)/2$ is close to constant 
while $k \to \sum_{j=1}^k d_j+d_{j+1}$ is close to linear. 
We can so construct examples of quivers on $n$ vertices 
for which the degree bound is smaller on half of the spectrum 
while the edge size bound is smaller on the other half of 
the spectrum. 

\paragraph{}
For $S_k$ we have now relations of the vertex degree sequences $D_k$ 
and the Brouwer sequence $B_k=m+k(k+1)/2$ which when adapted to quivers with
multiple connections becomes $B_k = m+r+k(k+1)/2$. We also adapt the Lew bound 
$H_k = m+r+k^2$. 
Let us summarize a few inequalities including the more elementary 
$D_k \leq B_{k-1}$. 

\begin{thm}
Let $G$ be an arbitrary quiver. \\
a) $D_k \leq S_k$. \\
b) $S_k \leq 2 D_k$. \\
c) $D_k \leq B_{k-1}$.  \\
d) $S_k \leq H_k$. \\
\end{thm}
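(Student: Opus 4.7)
The plan is to treat the four items as a compilation: each one follows from a tool already introduced earlier in the paper, so the proof should just invoke the right named ingredient and, where needed, check how the quantities behave under the loop/multiple-edge extension from simple graphs to quivers.

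For (a), I would appeal directly to the Schur majorization inequality, which asserts that the sorted diagonal entries of any Hermitian matrix are majorized by the sorted eigenvalues. Since $K=D-A$ has the sorted vertex degrees $d_1\geq d_2\geq\cdots\geq d_n$ on its diagonal (in the loop case each loop contributes $1$ to the diagonal entry, which is exactly how the paper defined $d_j$), partial sums yield $D_k\leq S_k$ for every $k$. This is the same observation noted just before the Sandwich theorem.

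For (b), I would sum the per-eigenvalue bound $\lambda_j\leq d_j+d_{j+1}$ established in \cite{Knill2024} and recalled as the upper ingredient of the Sandwich theorem. The telescoping identity
\begin{equation*}
\sum_{j=1}^{k}(d_j+d_{j+1}) \;=\; 2D_k - d_1 + d_{k+1}
\end{equation*}
together with $d_{k+1}\leq d_1$ gives $S_k\leq 2D_k$. For (c), nothing new is needed: this is exactly Proposition~1, already proved by the star-decomposition argument (the union of the $k$ stars at the top-degree vertices has at least $D_k-\binom{k}{2}$ edges, and the adjustment for loops and multiple edges was checked there).

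The only substantive item is (d), and this is where I would expect the real work to sit if one did not have the recent literature. My plan is to quote the Lew bound $S_k\leq m+k^2$ from \cite{Lew2025} for finite simple graphs and then bootstrap it to all quivers by the same monotonicity argument used to extend (BC) from simple graphs to quivers earlier in the paper. Namely: adding a loop at a vertex increases $S_k$ by at most $1$ (Hadamard perturbation plus the trace identity) while $H_k=m+r+k^2$ also increases by exactly $1$; adding a multiple edge $(x,y)$ increases $S_k$ by at most $2$ while $H_k$ increases by exactly $2$, since both $m$ and $r$ grow by $1$. Induction on the number of loops and multiple edges added to a spanning simple subgraph then transfers the Lew bound to arbitrary quivers. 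The main obstacle is thus the black-box import of \cite{Lew2025}; parts (a)-(c) are essentially one-line consequences of Schur, the edge-degree eigenvalue bound, and the already-proved star-decomposition proposition.
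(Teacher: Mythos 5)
Your proposal is correct and follows essentially the same route as the paper: Schur for (a), summing $\lambda_j\leq d_j+d_{j+1}$ for (b), the star-decomposition proposition for (c), and importing the Lew bound for simple graphs and transferring it to quivers via the loop/multiple-edge monotonicity (at most $+1$ versus exactly $+1$, at most $+2$ versus exactly $+2$) for (d). The explicit telescoping computation in (b) is a slightly more detailed version of the paper's one-line justification; no substantive difference.
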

\begin{proof}
a) $D_k \leq S_k$ is the Schur inequality (see e.g. \cite{Brouwer,BrouwerHaemers2008}).\\
b) Follows from $\lambda_j \leq d_j + d_{j+1}$ with the assumption $d_{n+1}=0$ (see 
   Theorem 1 in \cite{Knill2024}, a result for arbitrary quivers). \\
c) Assume first that $G$ has no multiple connections. 
   Take the sub-graph $H$ of $G$ which is the union of the $k$ largest embedded star graphs in $G$.
   There are maximally $k(k-1)$ connections between the $k$ centers of these star graphs
   so that $H$ has at least
   $D_k-k(k-1)/2$ edges. As this is $\leq m$, we have $D_k \leq m+k(k-1)/2 = B_{k-1}$. \\
   We finished the case without multiple connections. 
   If any  multiple connection is introduced, then right hand side increases by $2$ while
   the left hand can only increase maximally by $2$. \\
d) The result for graphs is shown in \cite{Lew2025}. This upgrades to quivers. For each 
   $1 \leq k \leq n$, adding a loop increases the left hand side by $1$ or less while
   the right hand side increases by $1$. When adding a multiple connection, the left hand 
   side increases by maximally $2$, while the right hand side increases by $2$. 
\end{proof}

\section*{Appendix: Quiver gradient}

\paragraph{}
We recall some {\bf quiver calculus} from \cite{Knill2024}.
Quivers are a class of geometric objects containing finite simple graphs. 
They are natural because the set of quivers is a {\bf pre-sheaf category} 
and so forms a {\bf topos}. The nomenclature is not always uniform. 
Quiver-graphs are sometimes called ``graphs" 
\cite{BR},``general graphs" \cite{Brualdi2004},
``pseudo graphs" \cite{Vasudev}, 
``loop multi-graphs" \cite{TrinajsticChemicalGraphTheory}
or ``r\'eseaux" \cite{VerdiereGraphSpectra}.

\paragraph{}
The calculus on graphs parallels the calculus on 
Riemannian manifolds. In particular, Hodge theory works, as maybe pointed out 
first by Eckmann \cite{Eckmann1944} (in arbitrary dimensions). 
The linear algebra approach to cohomology is technically simpler:
the {\bf $k$-th cohomology} can be identified with the space of 
{\bf k-harmonic forms}. They form the kernel of the k'th form Laplacian. 
Their dimension is the k'th Betti number $b_k$. 
For quivers, we only deal with one-dimensional structures ,
for which only the zero'th and first cohomology are relevant. 

\paragraph{}
Given a quiver $G=(V,E)$, with $E \subset V \times V$. 
By enumerating the $n \geq 1$ vertices and $m \geq 1$ edges
arbitrarily and by orienting the edges arbitrarily, we get a 
$m \times n$ matrix $F=d_0$ defined by $F_{e,v}=1$ if $e=(w,v)$ and 
$F_{e,v}=-1$ if $e=(v,w)$ and $F_{e,v}=0$ if $e=(v,v)$. This matrix 
is the {\bf quiver gradient}. It maps {\bf 0-forms}, 
functions on vertices to {\bf 1-forms}, functions on oriented edges.
The transpose matrix $d_1=F^T$ is the {\bf quiver divergence}. 
It maps $1$-forms to $0$-forms by telling a vertex
$v$ how much the total in-flux and out-flux coming from edges attached to $v$. 
The Kirchhoff matrix $K =d_1 d_0 F^T F$ is now a map on 0-forms, 
represented by a $n \times n$ matrix, while $K' = F^T F=d_0 d_1$ 
is a map on 1-form, represented by a $m \times m$ matrix. 
The matrix $K$ does not depend on the orientation of the edges. 
The matrix $K'$ does. But the spectrum of $K'$ does not. 

\paragraph{}
The nullity of $K$ is the $0$'th Betti number $b_0$. 
The nullity of $K'$ is the first Betti number $b_1$. 
The Euler characteristic of the quiver is $\chi(G)=n-m$. Euler-Poincar\'e
tells $\chi(G)=b_0-b_1$. 
A modern point of view is to combine the spaces of 0-forms and 
1-forms and write down $d_0,d_1$ as a nilpotent block matrices 
satisfying $d_0^2=0$ and $d_1^2=0$ and introduce the {\bf Dirac matrix}
$D=d_0+d_1$ which then gives the {\bf Hodge Laplacian} $L=D^2 = K \oplus K'$ 
consisting of the two blocks. If $A$ is a general linear map on forms, 
(here a $n \times m$ matrix), one can define the super trace 
${\rm str}(A) = \sum_{1 \leq k \leq n} A_{kk} - \sum_{n+1 \leq k \leq n+m} A_{kk}$. 
As in Riemannian geometry, the super trace of any power of the Hodge 
Laplacian is zero ${\rm str}(L^j)=0, j \geq 1$ while the super trace
of the identity matrix $1=L^0$ is the Euler characteristic $\chi(1)=n-m$. 
The {\bf McKean Singer equation} ${\rm str}(e^{-t L})=\chi(G)$ follows. 
Evaluating this at $t=0$ gives $m-n$ while evaluating this at $t \to \infty$
gives $b_0-b_1$. The {\bf McKean-Singer symmetry} has as a consequence 
that  $K$ and $K'$ are essentially isospectral. This is a fact exploited by 
Anderson-Morley \cite{AndersonMorley1985}. 

\section*{Appendix: Clovers and Ribbons}

\paragraph{}
A {\bf clover} is a graph with $n=1$ vertices with $m$ self-loops. 
The quiver gradient $F$ in that case is the $n \times 1$ matrix 
$F=\left[ \begin{array}{c} 1 \\ \cdots \\ 1 \end{array} \right]$ so that $K=F^TF=[m]$ and 
$K'=F F^T=\left[ \begin{array}{ccc} 1 & \cdots & 1 \\
                               \cdots & \cdots & \cdots \\ 
                               1 & \cdots & 1  \end{array} \right]$. 
The Betti numbers are $b_0={\rm dim}{\rm ker}(K))=0$ and 
$b_1={\rm dim}{\rm ker}(K')=m-1$, matching $\chi(G)=n-m=1-m = b_0-b_1$.

\paragraph{}
A {\bf ribbon} is a graph with $n=2$ vertices and $m \geq 2$ 
multiple connections and no loops. The quiver gradient is 
a $m \times 2$ matrix and $K=\left[ \begin{array}{cc} m & -m \\ -m & m \end{array} \right]$ 
with eigenvalues $\lambda_1=2m,\lambda_2 =0$. 
The unmodified Brouwer estimate fails already for $k=1$, 
we have $\lambda_1=2m > m + 1(1+1)/2=m+1$. 
But the modified estimate 
$\sum_{k=1}^k \lambda_j \leq m+r+k(k+1)/2$ holds, where $r$ is the number of redundant edges.
In the case of the ribbon with $m$ connections, we can take away $r=m-1$ redundant edges. 
The ribbon has the estimate $\sum_{k=1}^k \lambda_j \leq m+k(k+1)/2 + r$ sharp.

\begin{figure}[!htpb]
\scalebox{0.7}{\includegraphics{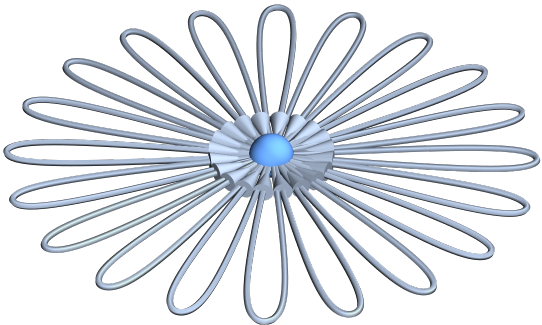}}
\scalebox{0.7}{\includegraphics{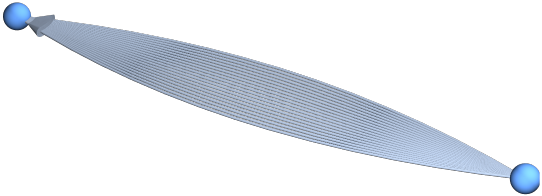}}
\label{Example}
\caption{
For a clover with $m$ loops, the Brouwer is sharp $\lambda_1=m$
For a ribbon, a graph $n=2$ vertices and $m \geq 2$ connections, 
the original Brouwer estimate is false because 
$\lambda_1=2m$ and $m+1(1+1)/2=m+1$. But the modified Brouwer estimate with 
$B_k=m+r+k(k+1)/2$ holds.
}
\end{figure}

\section*{Appendix: Hadamard perturbation}

\paragraph{}
The {\bf first Hadamard perturbation formula} $\lambda'(t) = v^T(t) E v$
describes using the eigenvector $v=v(t)$ how the eigenvalues $\lambda=\lambda(t)$ 
of a symmetric matrix $L$ 
change if it is perturbed as $L(t)=L+t E$, where $E$ is an other symmetric matrix. 
The eigenvector $v$ of $L$ to the eigenvalue $\lambda$ is assumed to have length $1$. 
Write $v(t)$ for the perturbed normalized eigenvector to the eigenvalue $\lambda(t)$. 
Take the eigenvalue equation $L(t) v(t) = \lambda(t) v(t)$  and multiply from the 
left with $v^T(t)$ to get $v^T(t) L(t) v(t) = \lambda(t) v^T(t) v(t) = \lambda(t)$. 
After differentiating using the product rule and using $L'(t)=E$ and $v \cdot v=1$ and
$Lv=\lambda v$, we end up with $\lambda'(t) = v^T(t) E v$. This derivative is bounded 
above by the norm of $E$. 

\paragraph{}
Since the first derivative stays uniformly bounded, also the higher derivatives do and
$\lambda_j(t)$ stays smooth. It has been known since Rellich's work in the 1930ies
that the eigenvalues of a smooth real symmetric matrix-valued function
can be globally labeled so that each $\lambda_j(t)$ remains smooth. 
It is not totally obvious, as the second Hadamard perturbation formula involves terms of
the form $\lambda_i-\lambda_j$ in the denominator which suggest troubles when eigenvalues
collide. This becomes indeed a problem in the non-selfadjoint case. There is also a smooth 
parametrization of the orthonormal eigenvector frame, but at eigenvalue crossings, 
a switching of labels can occur. See Chapter II of \cite{Kato}. 

\paragraph{}
In the case of the {\bf rank}-1 perturbation $E=e_l \cdot e_l^T$ 
(which is a projection onto the $l$'th basis vector,  because all matrix entries 
are $E_{ij}=0$ except for $E_{ll}=1$), this gives $\lambda'(t) = v_l(t)^2 \geq 0$. 
And because the normalization assumption was $\sum_j v_j^2=1$, 
we have $0 \leq \lambda'(t) \leq 1$. 
If a new loop is added to a quiver, then this means for the Kirchhoff
matrix that it changees from $K$ to $K + E$. 
All eigenvalues can then only increase. Because the sum of all eigenvalue changes 
is $1$ the total change of the trace from $t=0$ to $t=1$. 
We therefore also have $\sum_{j=1}^k \lambda_j' \leq 1$. The eigenvalue sum
change is between $0$ and $1$ for any $k$. 

\paragraph{}
If a new redundant edge $(a,b)$ is added, then again the eigenvalues 
can not decrease. Now, the eigenvalues and also  each of the sums 
$\sum_{j=1}^k \lambda_j$ can increase maximally by $2$. This is proven 
differently in \cite{Godsil} (Lemma 13.6.1). It can also be found in \cite{Spielman2025} 
(Corollary 6.2.2). 

This follows again from the Hadamard formula but using 
$E=(e_a-e_b) \cdot (e_a - e_b)^T$ which is twice a projection matrix. 
Hadamard perturbation gives $\lambda'(t) = v^T(t) E v(t) = (v_a-v_b)^2 \geq 0$. 
Since each eigenvalue can only increase and the sum of all eigenvalues 
changes by $2$ when moving from $K$ to $K+E$ we have the general case 
because the right hand side $m+r$ increases by $2$. 

\section*{Appendix: Illustration}

\paragraph{}
The Brouwer estimate is especially good for graphs with a small diameter.
This can be explained with being close to the complete graph. If $G$ is 
the graph complement of a graph with vertex degree $4d^2 \leq m$
then the Brouwer estimates holds. But in that regime, 
the vertex degree estimate $\lambda_j \leq d_j + d_{j-1}$ gives 
much better results. 

\begin{figure}[!htpb]
\scalebox{1.3}{\includegraphics{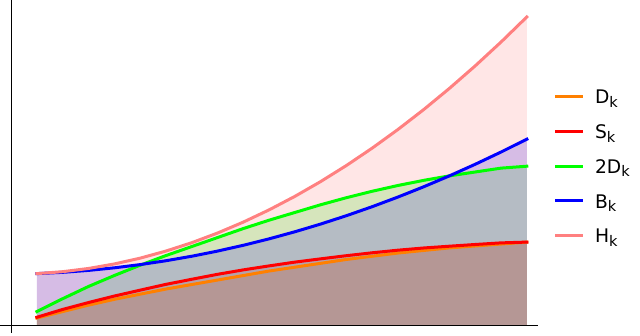}}
\label{Example}
\caption{
A random quiver without multiple connections. It has 
20 vertices, 50 edges and 30 loops. To the right, we see
the eigenvalue sum list $S_k = \sum_{j=1}^k \lambda_j$, the upper
bound using $\lambda_j \leq d_j + d_{j-1}$, the lower bound
both proven in \cite{Knill2024} and the Brouwer upper
bound. All except the Brouwer sequence $m + k(k+1)/2$ are concave down.
}
\end{figure} 

\begin{figure}[!htpb]
\scalebox{1.3}{\includegraphics{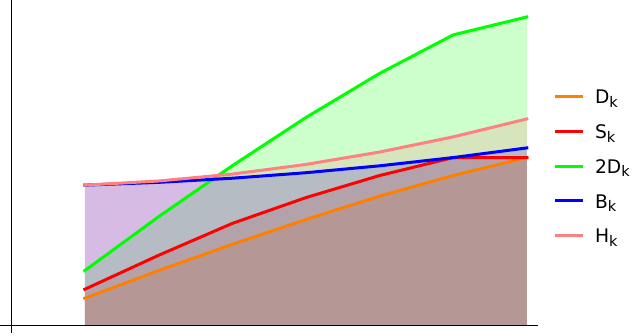}}
\label{Sharp}
\caption{
A complete graph $K_7$ with redundancy $r=40$ and $m=40+7*6/2=61$.
Since $\lambda_n=0$ for any graph without loops and for $k=n-1$
we have $122 = {\rm tr}(K) = \sum_{j=1}^{k} \lambda_j = m+k(k+1)/2+r = 61+21+40$,
the estimate is sharp here.
}
\end{figure}

\paragraph{}
The Brouwer estimate is sharp also in the quiver case if we take a complete 
graph with multiple ribbon connections. The picture shows the case with 
$$ K = \left[ \begin{array}{ccccccc}
 14 & -1 & -3 & -3 & -1 & -3 & -3 \\
 -1 & 19 & -3 & -2 & -5 & -5 & -3 \\
 -3 & -3 & 20 & -4 & -3 & -3 & -4 \\
 -3 & -2 & -4 & 13 & -1 & -1 & -2 \\
 -1 & -5 & -3 & -1 & 17 & -3 & -4 \\
 -3 & -5 & -3 & -1 & -3 & 19 & -4 \\
 -3 & -3 & -4 & -2 & -4 & -4 & 20 \\
\end{array} \right] \; . $$
It illustrates also that the Brouwer bound is for graphs with large degrees 
better in general than the upper bounds in terms of the vertex degrees. 
The edge degree estimate is better if the graph is large and the degree remains
small, like for discrete manifolds.

\section*{Appendix: Code} 

\paragraph{}
We conclude with an adaptation of the code given in \cite{Knill2024} but 
showing also the Brouwer upper bound $B_k= m+k(k+1)/2$ as well as the 
Lew upper bound $H_k = m+k^2$. It illustrates the known 
inequalities $D_k \leq S_k \leq 2D_k$ and $D_k \leq B_k$ and $S_k \leq H_k$
\cite{Lew2025}. The code was used to produce the graph in Figure 2:

\begin{tiny}
\lstset{language=Mathematica} \lstset{frameround=fttt}
\begin{lstlisting}[frame=single]
QuiverGradient[s_]:=Module[{v=VertexList[s],e=EdgeList[s],n,m,F,a,b},
 n=Length[v];m=Length[e];    F=Table[0,{m},{n}];  Q[x_]:=Subscript[x,"k"];
 Do[{a,b}={e[[k,1]],e[[k,2]]};F[[k,a]]+=1;If[a!=b,F[[k,b]]+=-1],{k,m}];F];
RandomQuiver[{n_,m_,l_,c_}]:=Module[{G=RandomGraph[{n,m}],v,e,q={},R,A},
 v=VertexList[G];e=EdgeRules[G];R=RandomChoice;A=Append;U[u_]:=u[[1]]->u[[2]];
 Do[x=R[v];q=A[q,x->x],{l}];Do[q=A[q,U[R[e]]],{c}];Graph[v,Join[e,q]]];
n=20;r=0;G=RandomQuiver[{n,50,30,r}];F=QuiverGradient[G];K=Transpose[F].F;m=Length[F]; 
Eigen=Reverse[Sort[Eigenvalues[1.0*K]]];    d=Reverse[Sort[Table[K[[k,k]],{k,n}]]];
SignEigen=Reverse[Sort[Eigenvalues[1.0*Abs[K]]]];
DegreeUpper  = Table[d[[k]]+If[k<n,d[[k+1]],0],{k,n}];     (* which is <= 2d *)
s=Table[Sum[d[[j]],{j,k}],{k,n}];           (* Schur          *)
f=Table[Sum[Eigen[[j]],{j,k}],{k,n}];       (* Eigenvalues    *)
c=Table[Sum[SignEigen[[j]],{j,k}],{k,n}];   (* Signed         *)
g=Table[Sum[DegreeUpper[[j]],{j,k}],{k,n}]; (* Knill [4]      *)
b=Table[m+k(k+1)/2 + r,{k,n}];              (* Brouwer        *)
u=Table[m+k^2+r,{k,n}];                     (* Lew 2025       *)
ListPlot[{s,f,g,b,u,c},Filling->Bottom,PlotStyle->{Orange,Red,Green,Blue,Pink,Yellow},
  PlotLegends->{Q["D"],Q["S"],Q["2D"],Q["B"],Q["H"],Q["A"]},Joined->True,Ticks->None]
\end{lstlisting}
\end{tiny} 


\vfill

\bibliographystyle{plain}

\begin{thebibliography}{10}

\bibitem{AndersonMorley1985}
W.N. Anderson and T.D. Morley.
\newblock Eigenvalues of the {L}aplacian of a graph.
\newblock {\em Linear and Multilinear Algebra}, 18(2):141--145, 1985.

\bibitem{BR}
R.~Balakrishnan and K.~Ranganathan.
\newblock {\em A textbook of Graph Theory}.
\newblock Springer, 2012.

\bibitem{BrouwerHaemers2008}
A.E. Brouwer and H.H Haemers.
\newblock A lower bound for the {L}aplacian eigenvalues of a graph - proof of a
  conjecture by guo.
\newblock {\em Lin. Alg. Appl.}, 429:2131--2135, 2008.

\bibitem{Brouwer}
A.E. Brouwer and W.H. Haemers.
\newblock {\em Spectra of graphs}.
\newblock Springer, 2012.

\bibitem{Brualdi2004}
R.A. Brualdi.
\newblock {\em Introductory Combinatorics}.
\newblock Pearson Prantice Hall, forth edition, 2004.

\bibitem{Chen2019}
X.~Chen.
\newblock {On Brouwer's conjecture for the sum of k largest Laplacian
  eigenvalues of graphs}.
\newblock {\em Linear algebra and its Applications}, 578:402--410, 2019.

\bibitem{Cooper2021}
J.~N. Cooper.
\newblock {Constraints on Brouwers Laplacian spectrum conjecture}.
\newblock {\em Linear Algebra Appl}, 615, 2021.

\bibitem{VerdiereGraphSpectra}
Y.Colin de~Verdi{\`e}re.
\newblock {\em Spectres de Graphes}.
\newblock Soci{\'e}te Math{\'e}matique de France, 1998.

\bibitem{Eckmann1944}
B.~Eckmann.
\newblock {Harmonische Funktionen und Randwertaufgaben in einem Komplex}.
\newblock {\em Comment. Math. Helv.}, 17(1):240--255, 1944.

\bibitem{AshrafOmidiTayfee2013}
B.~Tayfeh-Rezaie F.~Ashraf, G.R.~Omidi.
\newblock On the sum of signless laplacian eigenvalues of a graph.
\newblock {\em Linear Algebra Appl.}, 438, 2013.

\bibitem{Godsil}
C.~Godsil and G.~Royle.
\newblock {\em Algebraic Graph Theory}.
\newblock Springer Verlag, 2001.

\bibitem{GroneMerrisSunder2}
R.~Grone and R.~Merris.
\newblock The {L}aplacian spectrum of a graph. {II}.
\newblock {\em SIAM J. Discrete Math.}, 7(2):221--229, 1994.

\bibitem{Guo2007}
J-M. Guo.
\newblock On the third largest {L}aplacian eigenvalue of a graph.
\newblock {\em Linear and Multilinear Algebra}, 55:93--102, 2007.

\bibitem{HMT2010}
W.~H. Haemers, A.~Mohammadian, and B.~Tayfeh-Rezaie.
\newblock {On the sum of Laplacian eigenvalues of graphs}.
\newblock {\em Linear Algebra Appl}, 432:2214--2221, 2010.

\bibitem{Kato}
T.~Kato.
\newblock {\em Perturbation Theory for Linear Operators}.
\newblock Springer-Verlag, second edition, 1976.

\bibitem{Hydrogen}
O.~Knill.
\newblock The {H}ydrogen identity for {L}aplacians.
\newblock {\\}https://arxiv.org/abs/1803.01464, 2018.

\bibitem{KnillEnergy2020}
O.~Knill.
\newblock The energy of a simplicial complex.
\newblock {\em Linear Algebra and its Applications}, 600:96--129, 2020.

\bibitem{GreenFunctionsEnergized}
O.~Knill.
\newblock Green functions of energized complexes.
\newblock {\\}https://arxiv.org/abs/2010.09152, 2020.

\bibitem{Knill2024}
O.~Knill.
\newblock Eigenvalue bounds of the {Kirchhoff Laplacian}.
\newblock {\em Linear Algebra and its Applications}, 701:1--21, 2024.

\bibitem{WangLinZhangYe}
S.~Zhang K.Wang, Z.~Lin and C.~Ye.
\newblock {Brouwer's conjecture for the sum of the k largest Laplacian
  eigenvalues of some graphs}.
\newblock {\em De Gruyter Open Mathematics}, 22:1--9, 2024.

\bibitem{Lew2024}
A.~Lew.
\newblock Partition density, star arboricity, and sums of laplacian eigenvalues
  of graphs, 2024.
\newblock https://arxiv.org/abs/2410.04563.

\bibitem{Lew2025}
A.~Lew.
\newblock Sums of laplacian eigenvalues and sums of degrees, 2025.
\newblock https://arxiv.org/abs/2508.04209.

\bibitem{LiPan1999}
J-S Li and Y-L. Pan.
\newblock A note on the second largest eigenvalue of the {L}aplacian matrix of
  a graph.
\newblock {\em Linear and Multilinear Algebra}, 48, 2000.

\bibitem{LiGuo2022}
W.~Li and J.~Guo.
\newblock {On the full Brouwer's Laplacian spectrum conjecture}.
\newblock {\em Discrete Math.}, 345, 2022.

\bibitem{Mayank2010}
Mayank.
\newblock On variants of the {Grone-Merris} conjecture.
\newblock 2010.
\newblock Master Thesis Eindhoven University of Technology.

\bibitem{Rocha2019}
I.~Rocha.
\newblock Brouwer's conjecture holds asymptotically almost surely, 2019.
\newblock https://arxiv.org/abs/1906.05368.

\bibitem{Rocha2020}
I.~Rocha.
\newblock Brouwer's conjecture holds asymptotically almost surely.
\newblock {\em Linear Algebra and its Applications}, 597:198--205, 2020.

\bibitem{Spielman2025}
D.A. Spielman.
\newblock Spectral and algebraic graph theory.
\newblock lecture notes, 2025.

\bibitem{TorresTrevisan}
G.~Torres and V.~Trevisan.
\newblock Brouwer's conjecture for the cartesian product of graphs.
\newblock {\em Linear Algebra and its Applications}, 685:66--76, 2024.

\bibitem{TrinajsticChemicalGraphTheory}
N.~Trinajstic.
\newblock {\em Chemical Graph Theory}.
\newblock Mathematical chemistry series. CRC Press, 2 edition, 1992.

\bibitem{Vasudev}
C.~Vasudev.
\newblock {\em {Graph Theory with Applications}}.
\newblock New Age International Pulishers, 2006.

\end{thebibliography}

\end{document}